\documentclass[12pt]{article}

\usepackage{t1enc}
\usepackage[latin1]{inputenc}
\usepackage[english]{babel}
\usepackage{amsmath,amsthm}
\usepackage{amsfonts}
\usepackage{latexsym}
\usepackage{graphicx}
\usepackage{txfonts}

\newtheorem{thm}{Theorem}
\newtheorem{lem}[thm]{Lemma}

\newtheorem{conj}[thm]{Conjecture}

\newtheorem{cm}{Claim}

\textwidth 16.5cm \textheight 22cm \topmargin 0 cm \hoffset -1.5 cm \voffset 0cm

\date{}

\begin{document}

\title{List total coloring of pseudo-outerplanar graphs\thanks{This research is partially supported by the National Natural Science Foundation of China (No.\,11101243, 11201440) and the Fundamental Research Funds for the Central Universities.}}
\author{Xin Zhang\thanks{Email address: xzhang@xidian.edu.cn.}\\
{\small Department of Mathematics, Xidian University, Xi'an 710071, China}}
\date{}

\maketitle

\begin{abstract}\baselineskip  0.6cm
A graph is pseudo-outerplanar if each of its blocks has an embedding in the plane so that the vertices lie on a fixed circle and the edges lie inside the disk of this circle with each of them crossing at most one another. It is proved that every pseudo-outerplanar graph with maximum degree $\Delta\geq 5$ is totally $(\Delta+1)$-choosable.\\[.2em]
Keywords: pseudo-outerplanar graph; list total coloring; structures.
\end{abstract}

\baselineskip  0.6cm

\section{Introduction}

In this paper, all graphs are finite, simple and undirected. By $V(G)$, $E(G)$,
$\delta(G)$ and $\Delta (G)$, we denote the vertex set, the edge set, the minimum degree and the maximum degree
of a graph $G$, respectively. By $VE(G)$, we denote the set $V(G)\cup E(G)$. For undefined concepts we refer the reader to
\cite{Bondy}.

A total coloring of a graph $G$ is an assignment of colors to the vertices and edges of $G$ such that every pair of adjacent/incident elements receive different colors. A \emph{$k$-total coloring} of a graph $G$ is a total coloring of $G$ from a set of $k$ colors. The minimum positive integer $k$ for which $G$ has a $k$-total coloring, denoted by $\chi''(G)$, is the \emph{total chromatic number} of $G$.

Suppose that a set $L(x)$ of colors, called a \emph{list} of $x$, is assigned to each element $x\in VE(G)$. A total coloring $\varphi$ is called a \emph{list total coloring} of $G$ for $L$, or an \emph{$L$-total coloring}, if $\varphi(x)\in VE(G)$ for each element $x\in VE(G)$. If $|L(x)|\equiv k$ for every
$x\in VE(G)$, then an $L$-total coloring is called a \emph{list $k$-total coloring} and we say that $G$ is \emph{$k$-totally choosable}.
The minimum integer $k$ for which $G$ has a list $k$-total coloring, denoted by $\chi''_l(G)$, is the \emph{list total chromatic number} of $G$. It is obvious that $\chi''_l(G)\geq \chi''(G)\geq \Delta(G)+1$.

In 1997, Borodin, Kostochka and Woodall \cite{Borodin.lcc} raised the following conjecture, which is known as list total conjecture (LTC). In the same paper, they gives an affirmative answer to LTC for planar graphs with maximum degree at least 12.

\begin{conj}\label{conj}
For any graph $G$, $\chi''_{l}(G)=\chi''(G)$.
\end{conj}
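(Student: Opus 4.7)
The statement is the List Total Coloring Conjecture, one of the central open problems of the area, so the plan below is an outline of a natural line of attack rather than a foreseeable complete strategy. I will work with a hypothetical minimum counterexample and combine standard reductions with the kernel method applied to the total graph.

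Let $G$ be a minimum counterexample, so $\chi''_l(G)>k$ where $k=\chi''(G)$. Minimality forces every proper subgraph of $G$ (and every induced sub-configuration of its total graph $T(G)$) to admit an $L$-total coloring from any list assignment of size $k$. From this I will extract local structural consequences: $G$ is $2$-connected, low-degree vertices cannot cluster, and no configuration reducible by a greedy uncolor-recolor argument may appear. In parallel, I will use the standard identification $\chi''_l(G)=\chi'_l(T(G))$ to translate the problem into a list edge coloring of the total graph, so that all the machinery developed for edge choosability becomes available.

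The main engine is then Galvin-type kernel reasoning: produce an orientation $D$ of $T(G)$ with $\Delta^+(D)\le k-1$ in which every induced subdigraph admits a kernel, that is, an independent absorbing set. A natural candidate arises from a fixed optimal total coloring $\varphi$ of $G$, by orienting each edge of $T(G)$ from the endpoint of smaller $\varphi$-value toward the larger. If this orientation can be shown to be kernel-perfect, the Bondy--Boppana--Siegel theorem delivers the desired list coloring. The difficulty is that $T(G)$ is in general not the line graph of a bipartite graph, so the color-monotone orientation may contain directed odd cycles, and the kernel condition can fail on induced sub-tournaments coming from triangles of $T(G)$.

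The principal obstacle, therefore, is constructing (or proving the existence of) a globally kernel-perfect orientation of $T(G)$ with no structural assumption on $G$. I expect any successful attack either to modify $\varphi$ locally at trouble spots, to deploy Alon's Combinatorial Nullstellensatz on the graph polynomial of $T(G)$, or to combine Kahn's semi-random nibble, which already yields $\chi''_l(G)=(1+o(1))\chi''(G)$ for large $\Delta$, with a structural discharging treatment for small $\Delta$, in the spirit of the Borodin--Kostochka--Woodall planar result cited above. Absent such a breakthrough, the realistic scope of this plan is to confirm the conjecture for restricted classes (planar, $K_4$-minor-free, or the pseudo-outerplanar graphs treated in this paper), reducing the general statement to the hardest small-$\Delta$ regimes where no known technique currently suffices.
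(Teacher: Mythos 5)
You were asked to prove Conjecture \ref{conj}, which is the List Total Colouring Conjecture of Borodin, Kostochka and Woodall. The paper does not prove it: it is stated explicitly as an open conjecture, and the paper's actual contribution is only the special case of pseudo-outerplanar graphs with $\Delta\geq 5$ (Theorem \ref{listtotal}), obtained via the structural Theorem \ref{str} and the reducible-configuration Lemmas \ref{b}--\ref{g}, not via any of the techniques you outline. Your text is, as you yourself concede, a research programme rather than a proof, so the verdict is that there is a genuine gap --- indeed the gap is the entire argument, since no step of the programme is carried out.

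Two concrete points about why the programme as written would not close. First, a total colouring of $G$ is a proper \emph{vertex} colouring of the total graph $T(G)$, so the correct identification is $\chi''_l(G)=\chi_l(T(G))$, not $\chi'_l(T(G))$; your subsequent kernel argument does orient $T(G)$ itself, which is the vertex version, so the displayed identity is a slip, but it signals that the ``machinery developed for edge choosability'' is not directly importable. Second, and more seriously, the colour-monotone orientation of $T(G)$ induced by an optimal total colouring is acyclic and hence automatically kernel-perfect (every finite acyclic digraph has a kernel); the obstruction is not kernel-perfection but the out-degree bound. The Bondy--Boppana--Siegel lemma requires $d^+(x)\leq |L(x)|-1=k-1$ for every $x$, whereas an edge $uv$ of $G$ has degree about $d(u)+d(v)\approx 2\Delta$ in $T(G)$, so its out-degree in a colour-monotone orientation can be nearly $2\Delta$, roughly twice what is allowed. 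Galvin's theorem succeeds for line graphs of bipartite graphs precisely because a clever non-monotone orientation halves the out-degrees while preserving kernel-perfection, and no analogue is known for total graphs; your plan does not supply one. Your fallback --- confirming the conjecture on restricted classes --- is exactly what the paper does, but by entirely different, elementary means: a minimal counterexample, the unavoidable configurations (a)--(g) of Theorem \ref{str}, and hand-crafted list extensions in Lemmas \ref{b}--\ref{g}.
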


Recently, LTC was investigated by many authors including \cite{Hou1,Hou2,Juvan,Liu1,Liu2,Liu3,Wang2,WJL,Zhou}. In particular, Wang and Lih \cite{Wang2} confirmed LTC for outerplanar graphs with maximum degree at least 4, and this result was generalized to series-parallel graphs by Zhou, Matsuo and Nishizeki \cite{Zhou} in 2005. However, this ``list total conjecture'' is still very much open.

In this paper, we investigate the list total colorings of pseudo-outerplanar graphs, another class of graphs (different from series-parallel graphs) between outerplanar graphs and planar graphs. A graph is \emph{pseudo-outerplanar} if each of its blocks has an embedding in the plane so that the vertices lie on a fixed circle and the edges lie inside the disk of this circle with each of them crossing at most one another. For example, $K_{2,3}$ and $K_4$ are both pseudo-outerplanar graphs. The concept of pseudo-outerplanar graph was first introduced by Zhang, Liu and Wu \cite{ZPOPG} in 2012. They proved that the class of outerplanar graphs is the intersection of the classes of pseudo-outerplanar graphs and series-parallel graphs.

The purpose of this paper is to show that LTC holds for pseudo-outerplanar graphs with maximum degree at least 5, and thus extending one result of \cite{Arxiv}, where is proved that every pseudo-outerplanar graphs with maximum degree at least 5 is totally $(\Delta+1)$-colorable.

\section{Structural properties of PO-graphs}

In what follows, we always assume that every pseudo-outerplanar graph $H$ considered in this paper has been drawn on the plane so that its pseudo-outerplanarity is satisfied; and call such a drawing a \emph{pseudo-outerplanar diagram}.
Let $H$ be a pseudo-outerplanar diagram and let $G$ be a block of $G$. Denote by $v_1, v_2, \ldots, v_{|G|}$ the vertices of $G$ that lie in a clockwise sequence. Let $\mathcal{V}[v_i,v_j]=\{v_i, v_{i+1}, \ldots, v_j\}$ and $\mathcal{V}(v_i,v_j)=\mathcal{V}[v_i,v_j]\backslash \{v_i,v_j\}$, where the subscripts are taken modular $|B|$. A vertex set $\mathcal{V}[v_i,v_j]$ is a \emph{non-edge} if $j=i+1$ and $v_iv_j\not\in E(G)$, is a \emph{path} if $v_k v_{k+1}\in E(G)$ for all $i\leq k<j$, and is a \emph{subpath} if $j>i+1$ and some edges in the form $v_kv_{k+1}$ for $i\leq k<j$ are missing. An edge $v_iv_j$ in $G$ is a \emph{chord} if $j-i\neq 1$ or $1-|G|$. By $\mathcal{C}[v_i,v_j]$, we denote the set of chords $xy$ with $x,y\in \mathcal{V}[v_i,v_j]$. We say that a chord $v_kv_l$ is contained in a chord $v_iv_j$ if $i\leq k\leq l\leq j$.

\begin{lem}\label{path}
Let $v_i$ and $v_j$ be vertices of a 2-connected pseudo-outerplanar diagram $G$. If there is no crossed chords in $\mathcal{C}[v_i,v_j]$ and no edges between
$\mathcal{V}(v_i,v_j)$ and $\mathcal{V}(v_{j},v_{i})$, then $\mathcal{V}[v_i,v_j]$ is either non-edge or path.
\end{lem}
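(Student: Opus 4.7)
I plan to induct on $n = j - i + 1 = |\mathcal{V}[v_i, v_j]|$. The base case $n = 2$ is immediate: $\mathcal{V}[v_i, v_j] = \{v_i, v_{i+1}\}$ is a non-edge if $v_iv_{i+1} \notin E(G)$ and a (trivial) path otherwise. For the inductive step $n \geq 3$, the target is to show that every arc edge $v_kv_{k+1}$ for $i \leq k < j$ is present, so that $\mathcal{V}[v_i, v_j]$ is forced to be a path (it is too large to be a non-edge).

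The foundational observation is that every $v_m \in \mathcal{V}(v_i, v_j)$ satisfies $N_G(v_m) \subseteq \mathcal{V}[v_i, v_j]$ (by the no-edges-between hypothesis) and $|N_G(v_m)| \geq 2$ (by 2-connectivity of $G$). Supposing for contradiction that $v_kv_{k+1} \notin E(G)$ for some $i \leq k < j$, I examine the vertex $v_{k+1}$ when it is internal, i.e., $k+1 < j$; the boundary case $k+1 = j$ is handled symmetrically using $v_k = v_{j-1}$, which is then internal.

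When $v_{k+1}$ is internal, the argument branches on whether $v_{k+1}$ has any neighbor in $\{v_i, \ldots, v_{k-1}\}$. If not, then every neighbor of $v_{k+1}$ lies in $\{v_{k+2}, \ldots, v_j\}$; letting $v_t$ be the largest-index neighbor, the degree bound gives $t \geq k+3$, producing an uncrossed chord $v_{k+1}v_t \in \mathcal{C}[v_i, v_j]$. Combining its uncrossedness with the two hypotheses, I show that every vertex in $\{v_{k+1}, \ldots, v_{t-1}\}$ has its $G$-neighbors confined to $\{v_{k+1}, \ldots, v_t\}$, making $v_t$ a cut vertex of $G$ (it separates this set from $v_k$) and contradicting 2-connectivity. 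Otherwise, let $v_s$ be the largest neighbor of $v_{k+1}$ with $s \leq k-1$: the uncrossed chord $v_sv_{k+1}$ confines the $G$-neighbors of $\{v_{s+1}, \ldots, v_k\}$ inside $\{v_s, \ldots, v_{k+1}\}$. This lets me verify that the sub-range $[v_s, v_{k+1}]$, of size $k - s + 2 \leq j - i < n$, again satisfies the three lemma hypotheses; the inductive hypothesis then forces $\mathcal{V}[v_s, v_{k+1}]$ to be a path, giving $v_kv_{k+1} \in E$ and contradicting the assumption.

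The main obstacle is the careful verification of the no-edges-between hypothesis for the sub-range $[v_s, v_{k+1}]$ before invoking the IH: this requires combining the geometric fact that no edge crosses the uncrossed chord $v_sv_{k+1}$ (to rule out edges from $\{v_{s+1}, \ldots, v_k\}$ to $\{v_i, \ldots, v_{s-1}, v_{k+2}, \ldots, v_j\}$ within $\mathcal{V}[v_i, v_j]$) with the original no-edges-between hypothesis (to rule out edges from $\{v_{s+1}, \ldots, v_k\}$ to $\mathcal{V}(v_j, v_i)$). A secondary subtlety is that $v_i$ and $v_j$ themselves may have neighbors outside $\mathcal{V}[v_i, v_j]$, which is why the whole argument must hinge on an internal vertex ($v_{k+1}$, or $v_{j-1}$ in the boundary case) whose neighborhood is entirely pinned down.
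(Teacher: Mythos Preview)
Your inductive argument is correct. The case split on whether the internal endpoint $v_{k+1}$ (or $v_k$ in the boundary case) has a neighbor on the ``wrong'' side of the missing arc edge is sound: in subcase~B the uncrossed chord $v_sv_{k+1}$ together with the original no-edges-between hypothesis does confine the interior of $[v_s,v_{k+1}]$, so the inductive hypothesis applies to a strictly shorter interval (your size count $k-s+2\le (j-2)-i+2=j-i<n$ is right because $k+1<j$); in subcase~A the maximal-neighbor chord $v_{k+1}v_t$ isolates $\{v_{k+1},\dots,v_{t-1}\}$ from $v_k$, and since $v_k\notin\{v_{k+1},\dots,v_t\}$ this genuinely produces a cut vertex.

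As for comparison: the paper does not actually prove this lemma here---it simply cites Claim~1 of \cite{ZPOPG} and refers the reader to that paper. So there is no in-paper argument to compare against; your proof is a self-contained substitute for that external reference.
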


\begin{proof}
The proof is same to the one of Claim 1 in \cite{ZPOPG}, we refer the readers to [12, pp.2794].
\end{proof}

\begin{lem}\label{opg}{\rm \cite{Wang}}
Each outerplanar graph $G$ with minimum degree at least 2 contains a $2$-vertex that is adjacent to a $4^-$-vertex.
\end{lem}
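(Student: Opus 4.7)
The plan is to prove the lemma by induction on $|V(G)|$. A standard block-by-block reduction handles the non-2-connected case (apply the 2-connected version to a leaf block and choose the 2-vertex among its non-cut vertices, which is possible because every 2-connected outerplanar graph on at least three vertices carries at least two vertices of degree $2$), so the heart of the argument is the case in which $G$ is 2-connected outerplanar.

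Fix an outerplanar diagram of $G$ with Hamilton cycle $v_1v_2\cdots v_n$ and pick an \emph{innermost chord} $v_iv_j$, that is, a chord not containing any other chord in the sense of the excerpt. By the non-crossing property of the diagram (equivalently, by Lemma~\ref{path}), no chord can be incident to a vertex in $\mathcal{V}(v_i,v_j)$, so every such vertex is a 2-vertex of $G$. If $|\mathcal{V}(v_i,v_j)|\geq 2$, then $v_{i+1}$ and $v_{i+2}$ are adjacent 2-vertices and the desired pair is at hand. Otherwise $j=i+2$ and $v_{i+1}$ is a 2-vertex whose only neighbors are the chord endpoints $v_i,v_{i+2}$; if $\min\{d(v_i),d(v_{i+2})\}\leq 4$ the lemma is already satisfied, so from now on assume $d(v_i),d(v_{i+2})\geq 5$.

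To reduce, delete $v_{i+1}$ to obtain $G'$. The former chord $v_iv_{i+2}$ now lies on the outer cycle of $G'$; and $G'$ is 2-connected outerplanar with $\delta(G')\geq 2$ and one fewer vertex, so the inductive hypothesis yields a 2-vertex $u$ and a $4^-$-neighbor $w$ of $u$ in $G'$. Whenever $\{u,w\}\cap\{v_i,v_{i+2}\}=\emptyset$ the same pair serves $G$, because all the relevant degrees are preserved under the deletion. The remaining obstacle is when $w\in\{v_i,v_{i+2}\}$ with $d_{G'}(w)=4$: then $d_G(w)=5$ and the pair does not directly work in $G$.

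The main obstacle is therefore this triangle-chord configuration in which both endpoints have degree exactly $5$ in $G$ and a second 2-vertex is attached to one of them. I expect the cleanest resolution to be a discharging argument directly on $G$ rather than continuing the induction: assign each vertex the initial charge $d(v)-4$, whose total is $2|E(G)|-4n\leq -6$ by the bound $|E(G)|\leq 2|V(G)|-3$ for a 2-connected outerplanar graph; use the key observation that in a 2-connected outerplanar graph a 2-vertex is adjacent only to its two Hamilton-cycle neighbors (any chord neighbor automatically has degree at least $3$), which bounds the number of 2-neighbors of any vertex by two; and let each $5^+$-vertex transfer charge to its 2-vertex neighbors. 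Under the assumption that no 2-vertex has a $4^-$-neighbor, every vertex should end with nonnegative charge, contradicting the negative total. The delicate step is calibrating the rule at a degree-$5$ vertex with two 2-neighbors, which requires either a refined distribution (splitting the charge in coordination with the other $\geq 5$-vertex two Hamilton steps away along the cycle) or an additional local observation about the embedding to rule the configuration out directly.
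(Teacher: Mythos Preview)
The paper does not prove this lemma; it is quoted from \cite{Wang} and used as a black box in the proof of Theorem~\ref{str}. There is therefore nothing in the paper to compare your attempt against, and your argument must stand on its own.

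As written it does not. There are two gaps.

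The first you flag yourself: a degree-$5$ vertex with two $2$-neighbours carries initial charge $1$ but is asked to donate $2$. You suggest coordinating with the $5^{+}$-vertex two Hamilton-steps away or appealing to an unspecified local observation, but neither is actually carried out. Since this configuration is precisely the obstruction that made you abandon the induction in the first place, deferring it again to an unexecuted ``refinement'' leaves the proof incomplete at the same essential point in both frameworks.

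The second you do not flag: with initial charge $d(v)-4$, every $3$-vertex has charge $-1$, and your only rule moves charge from $5^{+}$-vertices to $2$-vertices. Nothing in the hypothesis ``no $2$-vertex has a $4^{-}$-neighbour'' forbids $3$-vertices, and you give no separate argument that a putative counterexample contains none. So the assertion that ``every vertex should end with nonnegative charge'' already fails at $3$-vertices, independently of the degree-$5$ issue. A vertex-only discharging with this initial charge cannot be made to work; one would need either face charges (via Euler's formula, exploiting that the weak dual of a $2$-connected outerplane graph is a tree whose leaves are exactly your innermost triangles) or a genuinely different scheme.
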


\begin{thm}\label{str}
Each pseudo-outerplanar graph $G$ with minimum degree at least 2 contains at least one of the following configurations:\\
$(a)$ a $2$-vertex $u$ adjacent to a $4^-$-vertex $v$;\\
$(b)$ a path $v_1u_1v_2u_2v_3u_3v_4$ so that $v_1v_2,v_1v_3,v_2v_3,v_2v_4,v_3v_4\in E(G)$, $d(u_1)=d(u_2)=d(u_3)=2$ and $d(v_2)=d(v_3)=5$;\\
$(c)$ a cycle $u_1u_2u_3u_4$ so that $d(u_2)=d(u_4)=2$;\\
$(d)$ a cycle $u_1u_2u_3u_4$ so that $u_2u_4\in E(G)$, $d(u_2)=d(u_4)=3$ and $d(u_3)\leq 4$;\\
$(e)$ a cycle $u_1u_2u_3u_4$ so that $u_2u_4\in E(G)$, $d(u_2)=d(u_4)=3$ and $u_3$ is adjacent to a $2$-vertex $v$;\\
$(f)$ a cycle $u_1u_2u_3u_4$ so that $u_2u_4\in E(G)$, $d(u_2)=d(u_4)=3$ and $u_3$ is adjacent to a $3$-vertex $v$ and a vertex $x$ with $vx\in E(G)$;\\
$(g)$ a cycle $u_1u_2u_3u_4$ so that $u_1u_3, u_2u_4\in E(G)$, $d(u_2)=d(u_4)=3$ and $u_3$ is adjacent to a vertex $v$ with $u_1v\in E(G)$.
\end{thm}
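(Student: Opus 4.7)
The plan is to argue by contradiction, using a minimum counterexample. Let $G$ be a pseudo-outerplanar graph with $\delta(G)\ge 2$ that avoids every configuration (a)--(g), chosen smallest with respect to $|V(G)|+|E(G)|$. A standard end-block reduction---apply the theorem inductively to an end-block $B$, noting that $\delta(B)\ge 2$ and that only the unique cut vertex of $G$ lying in $B$ can have a $G$-degree exceeding its $B$-degree, so a suitable configuration in $B$ survives in $G$---lets me assume $G$ is $2$-connected. I then fix a pseudo-outerplanar diagram of $G$ with boundary vertices $v_1,\ldots,v_n$ in clockwise order.

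If no two chords of $G$ cross in this diagram, then $G$ is outerplanar, and Lemma \ref{opg} immediately yields configuration (a), contradicting the choice of $G$. Hence some pair of chords crosses. Among all crossing pairs $v_iv_j, v_kv_l$ with $i<k<j<l$, I choose one that minimises $l-i$, breaking ties by minimising $j-k$. By this minimality, no chord properly contained in $v_iv_j$ or in $v_kv_l$ can participate in a crossing, and by Lemma \ref{path} each of the four boundary arcs $\mathcal V[v_i,v_k]$, $\mathcal V[v_k,v_j]$, $\mathcal V[v_j,v_l]$, $\mathcal V[v_l,v_i]$ is either a path or a non-edge; in particular, every vertex strictly interior to such an arc is a $2$-vertex of $G$.

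With this structure in hand, I would run a case analysis on the lengths of the four arcs and on the degrees of $v_i,v_k,v_j,v_l$. When every arc is a single edge, the four vertices $v_i,v_k,v_j,v_l$ are consecutive on the boundary and, together with the crossing chords, yield the ``$K_4$-plus-crossing'' picture; reading off the degrees of $v_i,v_k,v_j,v_l$ and using minimality to forbid unwanted extra chords matches exactly one of (d)--(g), with $v_kv_l$ playing the role of $u_2u_4$ and, in (g), the second diagonal $v_iv_j$ playing the role of $u_1u_3$. When at least one arc carries an interior vertex $w$, the failure of (a) forces every neighbour of $w$ to have degree $\ge 5$; iterating along a maximal run of interior $2$-vertices and combining with the crossing chord produces configuration (b) in the extremal case of three interior $2$-vertices on different arcs anchored by two $5$-vertices, and one of (c), (e) or (f) in the shorter cases where an arc contains a single $2$-vertex pinned against the chord.

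The main obstacle is the last step. The configurations (b)--(g) prescribe very specific degree sequences (e.g.\ $d(u_2)=d(u_4)=3$ in (d)--(g), $d(v_2)=d(v_3)=5$ in (b)) and very specific extra adjacencies (such as $u_1u_3$ in (g) or $vx\in E(G)$ in (f)), so the delicate part is translating the purely geometric data from the innermost crossing---arc lengths and the presence or absence of incidental chords at $v_i,v_k,v_j,v_l$---into exactly one of these rigid combinatorial patterns. The minimality of the chosen crossing and Lemma \ref{path} should, in each case, either realise one of (b)--(g) directly or reveal a strictly smaller crossing pair, contradicting the choice of $v_iv_j,v_kv_l$; arranging this dichotomy cleanly over all feasible arc-length profiles is where the bookkeeping becomes heavy.
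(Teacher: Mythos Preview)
Your reduction to the $2$-connected case and the choice of an innermost crossing pair match the paper's opening moves, but the argument breaks at the sentence ``each of the four boundary arcs \ldots\ is either a path or a non-edge; in particular, every vertex strictly interior to such an arc is a $2$-vertex of $G$.'' There are two problems.

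First, and fatally, the outer arc $\mathcal V[v_l,v_i]$ is not enclosed by the crossing chords $v_iv_j$, $v_kv_l$, so neither hypothesis of Lemma~\ref{path} is available for it. Your minimality (smallest $l-i$) does not forbid crossed chords in $\mathcal C[v_l,v_i]$: another crossing pair living entirely in the outer arc is permitted as soon as its span is at least $l-i$, and when $|G|$ is large the outer arc is where almost all of $G$ sits. In the paper the three inner arcs collapse immediately (equation~\eqref{eq1}: $v_i,v_k,v_j,v_l$ are four consecutive boundary vertices), and essentially the whole proof is then devoted to analysing how $v_i$ and $v_l$ attach to the outer arc. This requires several auxiliary gadgets (\emph{inner clusters}, \emph{$K_4$-clusters}, \emph{$\rtimes$-clusters}), each carrying its own minimum-width extremal argument, culminating in two contradictory Claims about whether the chord $v_lv_s$ reaching into the outer arc is crossed. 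Your sketch never leaves $\mathcal V[v_i,v_l]$, so it cannot manufacture configurations (e), (f), (g): in the paper these all arise from interactions between the innermost crossing and structure lying in the outer arc.

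Second, even on the three inner arcs, Lemma~\ref{path} only guarantees that consecutive boundary edges are present; it does not rule out non-crossing chords inside the arc, so an interior vertex need not have degree~$2$. The paper handles this (Case~1.1 inside Claim~\ref{cm1}) by a separate nesting argument on non-crossed chords, using the absence of configuration~(a) repeatedly to force a contradiction. Without that step your case analysis on ``arc lengths'' cannot begin, because the arcs need not be bare paths. What you flag in your final paragraph as heavy bookkeeping is in fact the substance of the proof, not a tail to be filled in later.
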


\begin{proof}
We first assume that $G$ is a 2-connected pseudo-outerplanar diagram with $v_1\ldots v_{|G|}$ being the vertices of this diagram that lie in a clockwise sequence. If $G$ contains no crossings, then $G$ is outerplanar, which implies that $G$ contains (a) by Lemma \ref{opg}. If $G$ contains a crossing, then we can choose one pair of crossed chords $v_iv_j$ and $v_kv_j$ such that\\[.5em]
(1) $v_iv_j$ crosses $v_kv_l$ in $G$;\\
(2) $v_i,v_k,v_j$ and $v_l$ lie in a clockwise sequence;\\
(3) besides $v_iv_j$ and $v_kv_l$, there is no crossed chord in $\mathcal{C}[v_i,v_l]$.\\[.5em]
Suppose that this theorem is false. By a same proof of Theorem 4.2 in \cite{ZPOPG}, here we can prove that
\begin{equation}\label{eq1}
    l-j=j-k=k-i=1~{\rm and}~v_iv_k,v_kv_j,v_jv_l\in E(G),
\end{equation}
since $G$ does not contain (a), (b) or (c). This pair of crossed chords $v_iv_j$ and $v_kv_j$ satisfying \eqref{eq1} are called \emph{co-crossed chords}.

Since the configuration (d) is absent from $G$, $\min\{d(v_i),d(v_l)\}\geq 5$. This implies that there is at least one chord $v_lv_s$ so that $s\neq i,k$ and at least one chord $v_mv_i$ so that $m\neq j,l$. We now choose $s$ and $m$ so that there is no chord $v_lv_t$ contained in $v_lv_s$ and no chord $v_iv_n$ contained in $v_iv_m$. In the following, we call the graph induced by $v_iv_j,v_kv_l,v_iv_k,v_kv_j,v_jv_l,v_lv_s$ or by $v_iv_j,v_kv_l,v_iv_k,v_kv_j,v_jv_l,v_iv_m$ an \emph{inner cluster} of $G$, denoted by $IC(i,l,s)$ or $IC(m,i,l)$, respectively. The \emph{width} of the two inner clusters defined above is $|\mathcal{V}[v_i,v_s]|$ and $|\mathcal{V}[v_m,v_l]|$, respectively.

\begin{cm}\label{cm1}
If $IC(i,l,s)$ is an inner clusters with the shortest
width among all the
inner clusters that contained in the graphs induced by $\mathcal{V}[v_i,v_s]$, then the chords $v_lv_s$ is crossed.
\end{cm}

\proof Without loss of generality, assume that $i=1$ and $l=4$.
If $v_4v_s$ is a non-crossed chord, then there is no edges between $\mathcal{V}(v_4,v_s)$ and $\mathcal{V}(v_s,v_4)$.
If there is no chords contained in $v_4v_s$, then (a) or (e) would appear in $G$. If there are chords contained in $v_4v_s$, then we consider two cases.

\emph{Case $1.1$. Every chord contained in $v_4v_s$ is non-crossed.}

If every chord contained in $v_4v_s$ is non-crossed, then by Lemma \ref{path}, $\mathcal{V}[v_4,v_s]$ is a path. We now claim that
there exists a chord in $S:=\mathcal{C}[v_4,v_s]\setminus \{v_4v_s\}$ that contains at least one other chord. If this proposition does not hold, then we can choose one chord $v_iv_j$ with $4<i<j\leq s$ so that $v_iv_j$ contains no other chords. If $|j-i|\geq 3$, then we can find two adjacent 2-vertices in $\mathcal{V}[v_i,v_j]$, a contradiction. If $|j-i|=2$, then $d(v_{i+1})=2$ and $d(v_i)\geq 5$. This implies that besides $v_iv_j$, there are at least two non-crossed chords that are incident with $v_i$, therefore, we would find two chords in $S$ so that one contains the other in $G$, a contradiction. Hence, we can choose a chord $v_iv_j$ in $S$ so that $v_iv_j$ contains at least one other chord, say $v_av_b$, and moreover, every chord contained in $v_iv_j$ contains no other chords. Without loss of generality, assume that $b\neq j$.
If $|b-a|\geq 3$, then we can find two adjacent 2-vertices in $\mathcal{V}[v_a,v_b]$, a contradiction. If $|b-a|=2$, then $d(v_{a+1})=2$ and $d(v_b)\geq 5$. This implies that besides $v_av_b$, there are at least two non-crossed chords that are incident with $v_b$, therefore, we would find two chords in $\mathcal[v_i,v_j]\setminus \{v_iv_j\}$ so that one contains the other, a contradiction to our assumption.

\emph{Case $1.2$. There is at least one pair of crossed chords that are contained in $\mathcal{C}[v_4,v_s]$.}

If there is at least one pair of crossed chord that are contained in $v_4v_s$, then we choose one pair of co-crossed chords $v_av_b$ and $v_cv_d$ with $c-a=b-c=d-b=1$ and $v_av_c,v_cv_b,v_bv_d\in E(G)$. Since the configuration (d) is absent from $G$, $\min\{d(v_a),d(v_d)\}\geq 5$.
If $a=4$, then it is easy to see that (f) occurs in $G$. If $d=s$, then there exists an inner cluster $IC(x,a,d)$ with $4\leq x<a$ and width $|\mathcal{V}[v_x,v_d]|<s$, a contradiction. Therefore, we assume that
$a\neq 4$ and $d\neq s$. Since $IC(1,4,s)$ is an inner cluster with the shortest width in $G$, there is no chord in the form $v_av_i$ with $4\leq i<a$ or in the form $v_bv_j$ with $d<j\leq s$. Since the configuration (d) is absent from $G$, $\min\{d(v_a),d(v_d)\}\geq 5$. The above two facts imply that $v_av_d\in E(G)$ and there are a chord $v_iv_d$ with $4\leq i<a$ and a chord $v_av_j$ with $d< j\leq s$. We call the graph induced by $v_av_b,v_av_c,v_av_d,v_bv_c,v_bv_d.v_cv_d,v_av_i$ and $v_dv_j$ a \emph{$K_4$-cluster} derived from $v_av_b$ and $v_cv_d$, and by $|\mathcal{V}[v_i,v_j]|$, we denote the \emph{width} of this $K_4$-cluster. Without loss of generality, we assume that the width of the above $K_4$-cluster is the shortest among all the $K_4$-clusters contained in the graph induced by $\mathcal{V}[v_4,v_s]$.
If there is no crossed chords in $\mathcal{C}[v_d,v_j]$, then by Lemma \ref{path}, $\mathcal{V}[v_d,v_j]$ is either a non-edge or a path, because there is no edges between $\mathcal{V}(v_d,v_j)$ and $\mathcal{V}(v_j,v_d)$. Since $d(v_d)\geq 5$, $\mathcal{V}[v_d,v_j]$ cannot be a non-edge, thus it is a path.
If there are no chords that are contained in $\mathcal{V}[v_d,v_j]$, then either (a), (e) or (g) would occur in $G$.
If there are chords contained in $\mathcal{V}[v_d,v_j]$, then
by similar arguments as in Case 1.1, one can prove that there is no non-crossed chords contained in $\mathcal{V}[v_d,v_j]$.
If there is at least one pair of co-crossed chords $v_{a'}v_{b'}$ and $v_{c'}v_{d'}$ with $a'<c'<b'<d'$ that are contained in $\mathcal{V}[v_d,v_j]$, then $a'\neq d$, because otherwise $IC(a,d,b')$ would be an inner cluster shorter than $IC(1,4,s)$, a contradiction. This implies,
by similar arguments as above, that either there is an inner cluster $IC(x,a',d')$ with $d\leq x<a'$ and width $|\mathcal{V}[v_x,v_{d'}]|<s$,
or $d'\neq j$ and there is an inner cluster $IC(a',d',y)$ with $d'<y\leq j$ and width $|\mathcal{V}[v_{a'},v_{y}]|<s$, or
$d'\neq j$ and
there is a $K_4$-cluster derived from $v_{a'}v_{b'}$ and $v_{c'}v_{d'}$ with width no more than $|\mathcal{V}[v_d,v_j]|<|\mathcal{V}[v_i,v_j]|$. In either case, we would obtain a contradiction to our assumption.

Hence, the chord $v_4v_s$ is crossed. \hfill$\|$

\begin{cm}\label{cm2}
If $IC(i,l,s)$ is an inner clusters with the shortest
width among all the
inner clusters that contained in the graphs induced by $\mathcal{V}[v_i,v_s]$, then the chords $v_lv_s$ cannot be crossed.
\end{cm}

\proof Without loss of generality, assume that $i=1$ and $l=4$.
Suppose, to the contrary, that $v_lv_s$ is crossed by one other chord $v_av_b$ with $4<a<s$.
 If there is at least one pair of crossed chords that are contained in $\mathcal{C}[v_4,v_a]$ or $\mathcal{C}[v_a,v_s]$, then by similar arguments as in Case 1.2, one can obtain contradictions. Therefore, every chord contained in $\mathcal{C}[v_4,v_a]$ or $\mathcal{C}[v_a,v_s]$ is non-crossed. Since there is no edges between $\mathcal{V}(v_4,v_a)$ and $\mathcal{V}(v_a,v_4)$, or between $\mathcal{V}(v_a,v_s)$ and $\mathcal{V}(v_s,v_a)$, by Lemma \ref{path}, $\mathcal{V}[v_4,v_a]$ or $\mathcal{V}[v_a,v_s]$ is either non-edge or path.
If $\mathcal{V}[v_4,v_a]$ and $\mathcal{V}[v_a,v_s]$ are non-edges, then $d(v_a)=1$, a contradiction.
If $\mathcal{V}[v_4,v_a]$ and $\mathcal{V}[v_a,v_s]$ are paths, then by similar arguments as in Case 1.1, (a) or (f) would appear in $G$.
If $\mathcal{V}[v_4,v_a]$ is path and $\mathcal{V}[v_a,v_s]$ is non-edge, then by similar arguments as in Case 1.1, (a) would appear in $G$ unless $a=5$, in which case (e) occurs in $G$.
Hence, we assume that $\mathcal{V}[v_4,v_a]$ is non-edge and $\mathcal{C}[v_a,v_s]$ is path in the following. By similar arguments as in Case 1.1, one can obtain contradictions if $s-a\geq 2$, so assume that $s-a=1$, that is, $a=5$ and $s=6$.
Since $d(v_a)=2$, $b\neq 1$, because otherwise we would find (e).
In the following, the graph induced by $v_1v_2,v_2v_3,v_3v_4,v_1v_3,v_2v_4.v_4v_6,v_5v_6$ and $v_5v_b$ (or a graph isomorphic to this graph) is called a \emph{$\rtimes$-cluster}, and the \emph{width} of this
$\rtimes$-cluster is $|\mathcal{V}[v_1,v_b]|$. Without loss of generality, we can assume the width of the above $\rtimes$-cluster is the shortest among all the $\rtimes$-clusters that are contained in the graph induced by $\mathcal{V}[v_1,v_b]$.

If there is no chords contained in $v_6v_b$, then (a) appears in $G$, so we assume that there are chords contained in $v_6v_b$.
If every chord contained in $v_6v_b$ is non-crossed, then by Lemma \ref{path}, $\mathcal{V}[v_6,v_b]$ is either non-edge or path.
If $\mathcal{V}[v_6,v_b]$ is a non-edge, then $v_5$ and $v_6$ are two adjacent 2-vertices, a contradiction, so we assume that $\mathcal{V}[v_6,v_b]$ is a path. In this case, we can can use similar arguments as in Case 1.1 to obtain contradictions. Therefore, we shall assume that there is at least one pair of crossed chords that are contained in $\mathcal{C}[v_6,v_b]$.

We arbitrarily choose one pair of co-crossed chords $v_{i'}v_{j'}$ and $v_{k'}v_{l'}$ with $i'<k'<j'<l'$ that are contained in $\mathcal{C}[v_6,v_b]$. Since both $v_6$ and $v_b$ are adjacent to a 2-vertex $v_5$, $i'\neq 6$ and $l'\neq b$, because otherwise we would find (e) in $G$. Due to the absence of (d), we have $\min\{d(v_{i'}),d(v_{l'})\}\geq 5$, which implies that there exists $s'\neq i',k'$ and $m'\neq j',l'$ so that $v_{l'}v_{s'}$ and $v_{i'}v_{m'}$ are chords in $G$. If $l'<s'\leq b$, then we can assume, without loss of generality, that $IC(i',l',s')$ is an inner cluster with the shortest width among all the inner clusters contained in the graph induced by $\mathcal{V}[v_{i'},v_{s'}]$. If $v_{l'}v_{s'}$ is a non-crossed chord, then we use similar arguments as in the proof of Claim \ref{cm1} to obtain contradictions. If $v_{l'}v_{s'}$ is a chord crossed by one other chord $v_{a'}v_{b'}$ with $l'<a'<s'$, then by similar arguments as in the first part of this proof, one can deduce that $s'-a'=a'-l'=1$, $v_{l'}v_{a'}\not\in E(G)$ and $v_{a'}v_{s'}\in E(G)$. This implies that $6\leq b'<i'$, because otherwise we would find a shorter $\rtimes$-cluster, a contradiction to our assumption. Since $v_{a'}v_{b'}$ has already crossed $v_{l'}v_{s'}$ in $G$, $b'\leq m'<i'$. Without loss of generality, assume that $IC(m',i',l')$ is an inner cluster with the shortest width among all the inner clusters contained in the graph induced by $\mathcal{V}[v_{m'},v_{l'}]$.
If $v_{i'}v_{m'}$ is a non-crossed chord, then we use similar arguments as in the proof of Claim \ref{cm1} to obtain contradictions. If $v_{i'}v_{m'}$ is crossed by one other chord $v_{c'}v_{d'}$ with $m'<c'<i'$, then by similar arguments as in the first part of this proof, one can deduce that $m'-c'=c'-i'=1$, $v_{c'}v_{i'}\not\in E(G)$ and $v_{c'}v_{m'}\in E(G)$. Since $v_{a'}v_{b'}$ has already crossed $v_{l'}v_{s'}$ in $G$, $b'\leq d'<m'$, which implies a shorter $\rtimes$-cluster that is contained in the graph induced by $\mathcal{V}[v_1,v_b]$, a contradiction to our assumption. Therefore, for any chord $v_{l'}v_{s'}$ with $s'\neq i',k'$, we have $6\leq s'<i'$. Similarly, we can prove, for any chord $v_{i'}v_{m'}$ with $m'\neq j',l'$, that $l'<m'\leq b$. Since $\min\{d(v_{i'}),d(v_{l'})\}\geq 5$, $v_{i'}v_{l'}\in E(G)$, which implies a $K_4$-cluster derived from $v_{i'}v_{j'}$ and $v_{k'}v_{l'}$. Without loss of generality, assume that the width of this $K_4$-cluster is the shortest among all the $K_4$-clusters contained in the graph induced by $\mathcal{V}[v_{m'},v_{s'}]$, then by similarly arguments as in the proof of Claim \ref{cm1}, we can obtain contradictions. \hfill$\|$

\vspace{2mm} It is easy to see that the above two claims are conflicting. Hence, every 2-connected pseudo-outerplanar graphs contains one of the configurations among (a)--(g). We now assume that $G$ has cut vertices and choose one of its end-blocks. Denote the chosen end-block by $B$ and the vertices of this end-block that lie in a clockwise sequence by $v_1,\ldots,v_{|B|}$. Without loss of generality, assume that $v_1$ is the unique cut-vertex of $B$.

First, assume that there are no crossings in the end-block $B$. Since $B$ is a 2-connected outerplanar graph, $B$ is hamiltonian, which implies that $\mathcal{V}[v_1,v_{|B|}]$ is a path. If there is at most one chord in $B$, then it is easy to see that $G$ contains (a). If there are two chords $v_iv_j$ and $v_sv_t$ in $B$, then without loss of generality, we can assume that $1\leq j<t<s<i$ ,
therefore, by similar arguments as in Subcase 1.1, one can prove that $G$ contains (a).

At last, assume that there is at least one pair of crossed chords $v_iv_j$ and $v_kv_l$ in $B$. Without loss of generality, assume that $1<i<k<j<l\leq |B|$ and that $v_iv_j$ and $v_kv_l$ are a pair of co-crossed chords (so they satisfy \eqref{eq1}).
Since (d) is absent from $G$, $\min\{d_B(v_i),d_B(v_l)\}\geq 5$.

If there is a vertex $v_s$ with $l<s\leq |B|$ or $s=1$ so that $v_lv_s$ is a non-crossed chord, then by the proof of Claim \ref{cm1}, one can find one of the configurations (a)--(g) in the graph induced by $\mathcal{V}[v_i,v_s]$, and moreover, $v_1$ is not the vertices with bounded degree in the configuration. If there is a vertex $v_m$ with $1\leq m<i$ so that $v_iv_m$ is a non-crossed chord, then we can prove the theorem similarly. Therefore,
we have
\begin{cm}\label{cm3}
There do not exist vertex $v_s$ with $l<s\leq |B|$ or $s=1$ so that $v_lv_s$ is a non-crossed chord or vertex $v_m$ with $1\leq m<i$ so that $v_iv_m$ is a non-crossed chord.\hfill$\|$
\end{cm}

Suppose that there is a vertex $v_s$ with $l<s\leq |B|$ or $s=1$ so that $v_lv_s$ is a chord crossed by one other chord $v_av_b$ with $l<a<s$.
If the graph induced by $v_iv_k,v_kv_j,v_jv_l,v_iv_j,v_kv_l,v_lv_s,v_sv_a$ and $v_av_b$ is not a $\rtimes$-cluster, then by the proof of Claim \ref{cm2}, one can find one of the configurations (a)--(g) in the graph induced by $\mathcal{V}[v_i,v_s]$, and thus in $G$.
If $s<b\leq |B|$ or $b=1$, then
by the proof of Claim \ref{cm2}, one can also find one of the configurations (a)--(g) in the graph induced by $\mathcal{V}[v_i,v_b]$, and moreover, $v_1$ is not the vertices with bounded degree in the configuration. Thus, we have $a-l=s-a=1$, $v_lv_a\not\in E(G)$, $v_av_s\in E(G)$ and $1<b\leq i$.
If $b=i$, then it is easy to prove that $d_B(v_l)\leq 4$, a contradiction.
If $b\neq i$, then there is a chord $v_mv_i$ with $b\leq m<i$, since
$d_B(v_i)\geq 5$ and $v_av_b$ is crossed by $v_lv_s$.
By Claim \ref{cm3}, $v_mv_i$ is a crossed chord, and we assume that it is crossed by $v_nv_t$ with $m<n<i$. Similarly as above, we shall also assume that $i-n=n-m=1$, $v_nv_i\not\in E(G)$ and $v_mv_n\in E(G)$.
If $b\leq t<m$, then by similar arguments as in the proof of Claim \ref{cm2}, one can find one of the configurations (a)--(g) in the graph induced by $\mathcal{V}[v_t,v_l]$, and thus in $G$. If $t=l$, then $d_B(v_i)\leq 4$, a contradiction. Therefore, we immediately deduce the following claim.
\begin{cm}\label{cm4}
There do not exist vertex $v_s$ with $l<s\leq |B|$ or $s=1$ so that $v_lv_s$ is a crossed chord, and similarly, there do not exist $v_m$ with $1\leq m<i$ so that $v_mv_i$ is a crossed chord.\hfill$\|$
\end{cm}

Since $\min\{d_B(v_i),d_B(v_l)\}\geq 5$, by Claims \ref{cm3} and \ref{cm4},
there exist $v_s$ with $1<s<i$ and $v_m$ with $l<m\leq |B|$ so that $v_lv_s$ and $v_iv_m$ are two chords that cross each other.
If there is at least one pair of crossed chords that are contained in $\mathcal{C}[v_l,v_m]$, then by similar arguments as in Case 1.2, one can obtain contradictions. If every chord contained in $\mathcal{C}[v_l,v_m]$ is non-crossed, then by Lemma \ref{path}, $\mathcal{V}[v_l,v_m]$ is either a path or a non-edge. However, if $\mathcal{V}[v_l,v_m]$ is a path with $m-l\geq 2$, then by similar arguments as in Case 1.1, one can find (a) or (e) in $G$; if
$\mathcal{V}[v_l,v_m]$ is a path with $m-l=1$, then (g) occurs in $G$, since $d_B(v_l)\geq 5$ implies $v_iv_l\in E(G)$; and if $\mathcal{V}[v_l,v_m]$ is a non-edge, then $d_B(v_l)\leq 4$, a contradiction.
\end{proof}

\section{List total coloring of PO-graphs}

In this section we present a sufficient condition for a pseudo-outerplanar graph to have
a list total coloring and prove the following theorem.

\begin{thm}\label{listtotal}
Let $G$ be a pseudo-outerplanar graph, and let $L$ be a list of $G$. If $$|L(x)|\geq \max\{6,\Delta(G)+1\}$$ for each $x\in VE(G)$, then $G$ has an $L$-total coloring.
\end{thm}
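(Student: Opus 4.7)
The plan is to argue by contradiction: let $G$ be a counterexample to Theorem~\ref{listtotal} with $|V(G)|+|E(G)|$ minimum, so every proper subgraph $G'$ of $G$ admits an $L'$-total coloring whenever $L'$ satisfies the list-size hypothesis. The strategy is to locate a small reducible configuration in $G$ via Theorem~\ref{str}, delete a carefully chosen set $S$ of vertices and edges to obtain a smaller pseudo-outerplanar graph $G-S$, $L$-totally color $G-S$ by minimality, and then extend the coloring to $S$; the resulting contradiction will prove the theorem.

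Before invoking Theorem~\ref{str} I must reduce to $\delta(G)\geq 2$. An isolated vertex is given any color from its list. For a degree-$1$ vertex $v$ with neighbor $u$, I first $L$-totally color $G-v$; the edge $uv$ then has at most $d_G(u)\leq\Delta$ forbidden colors, beaten by $|L(uv)|\geq\Delta+1$, and $v$ itself has only two forbidden colors (from $u$ and from $uv$), far fewer than $|L(v)|\geq 6$. With $\delta(G)\geq 2$, Theorem~\ref{str} guarantees one of the configurations (a)--(g) in $G$. In each case the common strategy is to delete the low-degree ``interior'' vertices -- the $2$-vertex $u$ in (a); the three $2$-vertices $u_1,u_2,u_3$ in (b); the two opposite $2$-vertices in (c); and the pair of $3$-vertices $u_2,u_4$ together with the chord $u_2u_4$ in (d)--(g) -- color the remaining graph by minimality, then re-insert and color the deleted elements one at a time so that at each step the element $x$ being colored faces strictly fewer forbidden colors than $|L(x)|$. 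For configuration (a), with $w$ the second neighbor of $u$, the order ``edge $uw$, edge $uv$, vertex $u$'' suffices: the forbidden sets have sizes at most $d(w)\leq\Delta<|L(uw)|$, then $d(v)+1\leq 5<6\leq|L(uv)|$, and finally $4<6\leq|L(u)|$.

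For configuration (c) a direct greedy order is too tight, but the four uncolored edges of the $4$-cycle carry lists of size at least $2$ and conflict along a $4$-cycle in the edge-conflict graph; since $C_4$ is bipartite, Galvin's theorem gives a valid list edge coloring of these four edges, after which the two $2$-vertices are easy to color from the room left over by $|L(\cdot)|\geq 6$. The hardest cases are expected to be configurations (b), (f) and (g), where a triangle $u_2u_3u_4$ (present thanks to the chord $u_2u_4$) together with additional low-degree neighbors around $u_3$ creates a densely connected block of uncolored incidences after the deletion. Here a naive greedy bound is insufficient; I plan to reformulate the leftover coloring problem as a list total coloring of a small fixed graph and verify solvability through a Hall-type or kernel argument, in the spirit of Galvin's theorem for bipartite list edge coloring. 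The list-size hypothesis $|L(x)|\geq 6$ is tuned precisely so that, once the detailed adjacencies given by each configuration are plugged in, Hall's condition is met. Organizing these residual instances and checking Hall's condition (or constructing the kernel of the conflict digraph) for each of (b), (d), (e), (f), (g) in turn is where the proof will demand the most care; this is the main technical obstacle.
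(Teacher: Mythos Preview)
Your overall architecture---minimal counterexample, reduce to $\delta(G)\geq 2$, invoke Theorem~\ref{str}, and discharge each configuration by deletion-and-extension---is exactly the paper's. Your treatment of low-degree vertices and of configurations (a) and (c) is fine and essentially matches the paper (which, incidentally, disposes of (a) by observing that in a minimal counterexample every $2$-vertex is adjacent only to $\Delta$-vertices, so (a) cannot occur when $\Delta\geq 5$; your direct argument is an equivalent way to say this).

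The gap is in the hard cases. You propose to handle (b), (d)--(g) by a ``Hall-type or kernel argument, in the spirit of Galvin's theorem.'' But Galvin's kernel method requires the conflict graph of the residual elements to be the line graph of a bipartite graph, and here it is not: in each of (d)--(g) the vertex $u_2$ together with the three incident edges $u_1u_2,u_2u_3,u_2u_4$ already forms a $K_4$ in the total-coloring conflict graph, so the residual instance is far from bipartite and no kernel-perfect orientation of the right kind exists. Likewise, a naive Hall condition on the lists is neither sufficient nor the right invariant for list-coloring a non-bipartite conflict graph. The paper does not find any such shortcut; instead it proves four dedicated extension lemmas (Lemmas~\ref{b}, \ref{de}, \ref{f}, \ref{g}) by lengthy hand case analysis, repeatedly recoloring one or two elements to break ties between available lists before a final greedy step. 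That case analysis is the actual content of the proof, and your proposal contains no substitute for it.

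A smaller point: for (b) you delete only $u_1,u_2,u_3$, but then the edge $u_1v_1$ may have only one available color (its list has size $\geq\Delta+1$, and $v_1$ together with its $d(v_1)-1\leq\Delta-1$ colored incident edges already forbids $\Delta$ colors), which is too tight to push through a path of six mutually constrained edges. The paper deletes $v_2$ and $v_3$ as well, then first colors $v_1v_3$ and $v_2v_4$ with care so that the remaining elements meet the precise list-size hypotheses of Lemma~\ref{b}; this extra slack is essential.
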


Before proving Theorem \ref{listtotal}, we introduce some necessary notations. Let $L$ be a list of a graph $G$ and let $L'$ be a list of a graph $G'\subset G$ with $L'(x)=L(x)$ for each element $x\in VE(G)$. Suppose that we have already obtained an $L'$-total coloring $\varphi'$ of $G'$, and that we are to extend $\varphi'$ to an $L$-total coloring $\varphi$ of $G$ without altering the colors in $G'$.
For each $x\in VE(G)$, let $L_{av}(x,\varphi')$ be the \emph{available list}
(the set of all colors in $L(x)$ that are available) for $x$ when $\varphi'$ is extended to an $L$-total coloring $\varphi$ of $G$.

\begin{lem}\label{b}
Suppose that $G$ contains a path $P=v_1u_1v_2u_2v_3u_3v_4$ so that $v_1v_2,v_1v_3,v_3v_4\in E(G)$ and $d(u_1)=d(u_2)=d(u_3)=2$.
Let $\varphi'$ be a partial $L$-total coloring of $G$ so that the uncolored elements under $\varphi'$ are $u_1,u_2,u_3,v_2,v_3,v_1v_2,v_2v_3,v_3v_4$ and the edges of the path $P$, where $L$ is a list assignment of $G$. If
$$\min\{|L_{av}(u_1v_1,\varphi')|,|L_{av}(u_3v_4,\varphi')|,|L_{av}(v_1v_2,\varphi')|,|L_{av}(v_3v_4,\varphi')|\}\geq 2,$$
$$\min\{|L_{av}(v_2,\varphi')|,|L_{av}(v_3,\varphi')|\}\geq 3,$$
$$|L_{av}(v_2v_3,\varphi')|\geq 4,$$and
$$\min\{|L_{av}(u_1v_2,\varphi')|,|L_{av}(u_2v_2,\varphi')|,|L_{av}(u_2v_3,\varphi')|,|L_{av}(u_3v_3,\varphi')|\}\geq 5,$$
then $\varphi'$ can be extended to an $L$-total coloring $\varphi$ of $G$ without altering the colors in $G'$.
\end{lem}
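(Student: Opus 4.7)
The plan is to extend $\varphi'$ by colouring the fourteen uncoloured elements in four phases, ordered so that every element, when assigned a colour, still has at least one option left after its already-coloured conflicting neighbours are discounted.

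\emph{Phase 1: outer edges.} Colour $v_1v_2$ and $v_3v_4$ from their lists of size $\ge 2$; they share no endpoint, so their choices are independent. Then colour $v_1u_1$ and $u_3v_4$, each of which needs avoid only one previously fixed colour, leaving at least one option.

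\emph{Phase 2: triangle core.} Colour the pairwise-conflicting triple $\{v_2,v_3,v_2v_3\}$ from residual lists of sizes at least $2,2,2$ respectively (obtained by removing $\varphi(v_1v_2)$, $\varphi(v_3v_4)$, and both of these, respectively). A triangle with three $2$-lists is always $3$-list-colourable except when the three lists coincide as a single $2$-element set $\{x,y\}$; in that degenerate case the two-fold freedom from Phase~1 saves us. A short case check, using $|L_{av}(v_2)|=|L_{av}(v_3)|=3$ and $|L_{av}(v_2v_3)|=4$ in the tight sub-case, shows that at least one of the four possible combinations of $(\varphi(v_1v_2),\varphi(v_3v_4))$ produces residual lists that are not all identical, hence admits a proper $3$-colouring of the triangle. \emph{This is the main obstacle of the proof.}

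\emph{Phase 3: middle edges.} Colour $u_1v_2, v_2u_2, u_2v_3, v_3u_3$ in this order, each from a list of size at least $5$. A direct tally shows that for the first three edges at most four coloured conflicts are present, so at least one option remains immediately. The last edge $v_3u_3$ meets five coloured conflicts $\{v_3,u_2v_3,v_2v_3,v_3v_4,u_3v_4\}$, of which the four touching $v_3$ are automatically pairwise distinct. To guarantee a coincidence, in step~$3$ of this phase I pick $\varphi(u_2v_3)$ equal to $\varphi(u_3v_4)$---admissible because these two edges share no endpoint---which reduces the number of distinct forbidden colours for $v_3u_3$ to four and leaves at least one valid colour. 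If that preferred coincidence is blocked (because $\varphi(u_3v_4)\notin L_{av}(u_2v_3)$), then either $\varphi(u_3v_4)$ already equals one of $\varphi(v_3),\varphi(v_2v_3),\varphi(v_2u_2)$ (in which case a coincidence among the five conflicts of $v_3u_3$ exists for free), or the symmetric match $\varphi(u_2v_3)=\varphi(v_1u_1)$ can be used in its place; the residual list of $u_2v_3$ has size at least $1$, so at least one of these alternatives is available.

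\emph{Phase 4: inner vertices.} Colour $u_1,u_2,u_3$. Each is a degree-$2$ vertex with $|L_{av}(u_i)|\ge |L(u_i)|-1\ge 5$ (only $v_1$ or $v_4$ is an originally coloured neighbour), and when its turn comes at most four conflicting neighbours---its two incident edges together with one or two of $v_2,v_3$---have been assigned, so at least one option remains. The delicate point throughout is Phase~2; in the other three phases the careful ordering above makes the list-size-versus-conflict-count arithmetic add up routinely.
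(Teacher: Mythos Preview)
Your phased plan is reasonable in spirit, and Phases 1, 2 and 4 are essentially fine (your Phase~2 case check is correct: if $L_{av}(v_1v_2)\cap L_{av}(v_3v_4)\neq\emptyset$ use a common colour so the $v_2v_3$--residual has size $\ge 3$, and in the disjoint case one verifies easily that not all four choices can yield three identical $2$-sets). The real problem is Phase~3, and your repair for the tight edge $v_3u_3$ does not work.

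The five conflicts of $v_3u_3$ are $\{\varphi(v_3),\varphi(v_2v_3),\varphi(v_3v_4),\varphi(u_3v_4),\varphi(u_2v_3)\}$. You propose to force $\varphi(u_2v_3)=\varphi(u_3v_4)$, and when that fails you assert two alternatives. First, you say that if $\varphi(u_3v_4)$ equals one of $\varphi(v_3),\varphi(v_2v_3),\varphi(v_2u_2)$ then ``a coincidence among the five conflicts of $v_3u_3$ exists for free''; but $v_2u_2$ is \emph{not} one of those five conflicts, so $\varphi(u_3v_4)=\varphi(v_2u_2)$ gives no coincidence at all. Second, you offer the ``symmetric match'' $\varphi(u_2v_3)=\varphi(v_1u_1)$, but $v_1u_1$ is also not among the five conflicts of $v_3u_3$, so this match is irrelevant to $v_3u_3$. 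Concretely, nothing in your argument rules out the situation where $\varphi(u_3v_4)$ lies outside $L_{av}(u_2v_3,\varphi')$ and is different from $\varphi(v_3),\varphi(v_2v_3),\varphi(v_3v_4)$; after removing external constraints the four Phase-3 edges then face a $P_4$ list-colouring with residual list sizes $1,2,2,1$, and such an instance can genuinely be uncolourable (e.g.\ residuals $\{a\},\{a,b\},\{b,c\},\{c\}$).

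The paper does not try to separate the ``outer'', ``core'' and ``middle'' elements into independent phases for exactly this reason. Instead it colours $v_3,v_3v_4,u_3v_4,u_3v_3$ \emph{together} in a first stage, running through a lengthy case analysis whose sole purpose is to guarantee that the resulting available lists at $v_2v_3$ and $v_2$ are not simultaneously squeezed to the same $2$-set (conditions (1)--(3) of Stage~1). Only with that coordination in hand can the remaining seven elements around $v_2$ be finished greedily in Stage~2. In other words, the interaction that breaks your Phase~3 is precisely what the paper's Stage~1 is engineered to pre-empt; a purely greedy ordering with conflict-counting does not suffice here.
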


\begin{proof}
Without loss of generality, we assume that
$|L_{av}(u_1v_1,\varphi')|=|L_{av}(u_3v_4,\varphi')|=|L_{av}(v_1v_2,\varphi')|=|L_{av}(v_3v_4,\varphi')|=2,$
$|L_{av}(v_2,\varphi')|=|L_{av}(v_3,\varphi')|=3,$
$|L_{av}(v_2v_3,\varphi')|=4,$ and
$|L_{av}(u_1v_2,\varphi')|=|L_{av}(u_2v_2,\varphi')|=|L_{av}(u_2v_3,\varphi')|=|L_{av}(u_3v_3,\varphi')|=5.$ (otherwise we can shorten some lists that assigned to the elements of $VE(G)$ so that those conditions are satisfied).
We extend $\varphi'$ to an $L$-total coloring $\varphi$ of $G$ by two stages.

\vspace{2mm}\noindent \emph{\textbf{Stage 1}. Color $u_3v_3,u_3v_4,v_3v_4$ and $v_3$ so that the resulted partial coloring $\varphi^1$ satisfies one of the following conditions:\\
$(1)$ $|L_{av}(v_2v_3,\varphi^1)|\geq 3$,\\
$(2)$ $|L_{av}(v_2v_3,\varphi^1)|=2$~and~$L_{av}(v_2v_3,\varphi^1)\neq L_{av}(v_2,\varphi^1)$~if~$|L_{av}(v_2,\varphi^1)|=2$,\\
$(3)$ $|L_{av}(v_2v_3,\varphi^1)|=2$~and~$L_{av}(v_1v_2,\varphi^1)\neq L_{av}(v_2,\varphi^1)$~if~$|L_{av}(v_2,\varphi^1)|=2$.
}

\vspace{2mm}We now prove that the coloring $\varphi^1$ constructed in stage 1 exists.
Assume that $L_{av}(v_3v_4,\varphi')=\{1,2\}$. We now color $v_3$ with a color, say 3, from $L_{av}(v_3,\varphi')\setminus \{1,2\}$.

\noindent\emph{\underline{Case 1}. $2\in L_{av}(u_3v_4,\varphi')$ (the case when $1\in L_{av}(u_3v_4,\varphi')$ is similar).}

Color $v_3v_4$ and $u_3v_4$ with 1 and 2, and then discuss two subcases.

\noindent\emph{\underline{Case 1.1}. $\{1,2,3\}\subseteq L_{av}(u_3v_3,\varphi')$ }

Assume that $L_{av}(u_3v_3,\varphi')=\{1,2,3,4,5\}$. Denote the current partial coloring by $\phi_0$.

If $4\not\in L_{av}(v_2v_3,\varphi')\setminus \{1,3\}$, then color $u_3v_3$ with 4, and we have $\{1,3\}\subseteq L_{av}(v_2v_3,\varphi')$, otherwise $\phi_0$ satisfies (1) and we let $\varphi^1:=\phi_0$.
Let $L_{av}(v_2v_3,\varphi')=\{1,3,n_1,n_2\}$, where $\{1,3\}\cap \{n_1,n_2\}=\emptyset$. If $L_{av}(v_2,\varphi')\neq \{3,n_1,n_2\}$, then $\phi_0$ satisfies (2) and we let $\varphi^1:=\phi_0$, so we assume that $L_{av}(v_2,\varphi')= \{3,n_1,n_2\}$. Now we erase the color on $u_3v_3$ and recolor $v_3v_4$ and $u_3v_4$ with 2 and a color $\phi_1(u_3v_4)\in L_{av}(u_3v_4,\varphi')\setminus \{2\}$, respectively.
If $\phi_1(u_3v_4)\neq 4$, then color $u_3v_3$ with 4. Since the current coloring $\phi_1$ satisfies (1) or (2), we let $\varphi^1:=\phi_1$.
If $\phi_1(u_3v_4)= 4$, then $L_{av}(u_3v_4,\varphi')=\{2,4\}$, and color $u_3v_3$ with 5. If $\{n_1,n_2\}\neq \{2,5\}$, then the current coloring $\phi_1$ satisfies (1) or (2), so let $\varphi^1:=\phi_1$.
If $\{n_1,n_2\}=\{2,5\}$, then recolor $u_3v_3$ with 3.
If $1\not\in L_{av}(v_3,\varphi')\setminus \{2,3\}$, then $L_{av}(v_3,\varphi')=\{2,3,5\}$, otherwise we can recolor $v_3$ with a color from $L_{av}(v_3,\varphi')\setminus \{2,3\}$, and the resulted partial coloring satisfies (2). In this case, we recolor $v_3v_4,u_3v_4$ and $u_3v_3$ with 1,2 and 4.
If the current coloring does not satisfy (3), then $L_{av}(v_1v_2,\varphi')=\{2,5\}$, thus we can construct a partial coloring satisfying (3) by recolor $v_3$ with 5. Therefore, we assume that $1\in L_{av}(v_3,\varphi')\setminus \{2,3\}$.
If $5\not\in L_{av}(v_3,\varphi')\setminus \{2,3\}$, then $L_{av}(v_3,\varphi')=\{1,2,3\}$, otherwise we can recolor $v_3$ with a color from $L_{av}(v_3,\varphi')\setminus \{1,2,3\}$, and the resulted partial coloring satisfies (2). In this case, we also recolor $v_3v_4,u_3v_4$ and $u_3v_3$ with 1,2 and 4, and color $v_3$ with 2. If the current coloring does not satisfies (3), then recolor $v_3$ with 3 and one can check that the new coloring satisfies (3). Therefore, $5\in L_{av}(v_3,\varphi')\setminus \{2,3\}$, which implies that $L_{av}(v_3,\varphi')=\{1,3,5\}$. In this case, we recolor $v_3v_4,u_3v_4$ and $u_3v_3$ with 1,2 and 4, and color $v_3$ with 3. If the current coloring does not satisfies (3), then recolor $v_3$ with 5 and the new coloring satisfies (3).

If $5\not\in L_{av}(v_2v_3,\varphi')\setminus \{1,3\}$, then we can do the similar arguments as above by symmetry, so we assume that
$\{4,5\}\subseteq L_{av}(v_2v_3,\varphi')$. Assume that $L_{av}(v_2v_3,\varphi')=\{4,5,n_1,n_2\}$, where $\{n_1,n_2\}\cap \{4,5\}= \emptyset$. If $\{n_1,n_2\}\neq \{1,3\}$, then color $u_3v_3$ with 4. If $5\not\in L_{av}(v_2,\varphi')$, then it is easy to see that the current partial coloring satisfies (1) or (2). If $5\in L_{av}(v_2,\varphi')$, then recolor $u_3v_3$ with 5 and the resulted partial coloring also satisfies (1) or (2). Therefore, we assume that
$L_{av}(v_2v_3,\varphi')=\{1,3,4,5\}$. Now we recolor $v_3v_4$ and $u_3v_4$ with 2 and a color $\phi_2(u_3v_4)\in L_{av}(u_3v_4,\varphi')\setminus \{2\}$, then color $u_3v_3$ with a color $\phi_2(u_3v_3)\in \{4,5\}\setminus \{\phi_2(u_3v_4)\}$. Without loss of generality, assume that $\phi_2(u_3v_3)=4$. We now have
$L_{av}(v_2,\varphi')=\{1,3,5\}$, otherwise $\phi_2$ satisfies (2) and let $\varphi^1:=\phi_2$. If $\phi_2(u_3v_4)\neq 5$, then recolor $u_3v_3$ with 5 and the resulted coloring satisfies (2). If $\phi_2(u_3v_4)=5$, then recolor $u_3v_3$ with 1 and the resulted coloring also satisfies (2).

\noindent\emph{\underline{Case 1.2}. $\{1,2,3\}\not\subseteq L_{av}(u_3v_3,\varphi')$ }

Since $|L_{av}(u_3v_3,\varphi')|=5$, we can assume that $\{4,5,6\}\subseteq L_{av}(u_3v_3,\varphi')$.
If $\{4,5\}\subseteq L_{av}(v_2v_3,\varphi')$, then $L_{av}(v_2v_3,\varphi')=\{1,3,4,5\}$, otherwise we color $u_3v_3$ with 6 and get a partial coloring satisfying (1). We now color $u_3v_3$ with 6, and deduce that $L_{av}(v_2,\varphi')=\{3,4,5\}$, otherwise the current partial coloring satisfies (2). In this case, we recolor $v_3v_4$, $u_3v_4$ and $u_3v_3$ with 2, $\phi_3(u_3v_4)\in L_{av}(u_3v_4,\varphi')\setminus \{2\}$ and $\phi_3(u_3v_3)\in \{4,5,6\}\setminus \{\phi_3(u_3v_4)\}$. It is easy to check that the partial coloring $\phi_3$ satisfies (2), so we let $\varphi':=\phi_3$. By symmetry, one can prove the same result if $\{4,6\}\subseteq L_{av}(v_2v_3,\varphi')$ or $\{5,6\}\subseteq L_{av}(v_2v_3,\varphi')$.
Therefore, we assume, without loss of generality, that $5,6\not\in L_{av}(v_2v_3,\varphi')$. We now color $u_3v_3$ with 5, and deduce that $\{1,3\}\subseteq L_{av}(v_2v_3,\varphi')$, otherwise the current partial coloring satisfies (1). Assume that $L_{av}(v_2v_3,\varphi')=\{1,3,n_1,n_2\}$, where $\{n_1,n_2\}\cap \{1,3\}=\emptyset$. We then have $L_{av}(v_2,\varphi')=\{3,n_1,n_2\}$, because otherwise the current coloring satisfies (2). In this case, we recolor $v_3v_4$, $u_3v_4$ and $u_3v_3$ with 2, $\phi_4(u_3v_4)\in L_{av}(u_3v_4,\varphi')\setminus \{2\}$ and $\phi_4(u_3v_3)\in \{5,6\}\setminus \{\phi_4(u_3v_4)\}$. One can check that the resulted partial coloring $\phi_4$ satisfies (2), thus we let $\varphi':=\phi_4$.

%\noindent\emph{\underline{Case 2}. $L_{av}(u_3v_4,\varphi')\cap \{1,2\}=\emptyset$.}

\noindent\emph{\underline{Case 2}. $3\in L_{av}(u_3v_4,\varphi')$}

We first color $u_3v_4$ with 3. Assume that $\{4,5\}\subseteq L_{av}(u_3v_3,\varphi')$.

If $\{4,5\}\subseteq L_{av}(v_2v_3,\varphi')$, then $1\in L_{av}(v_2v_3,\varphi')$. Otherwise, we color $v_3v_4$ and $u_3v_3$ with 1 and 4. If the current coloring does not satisfy (2), then recolor $u_3v_3$ with 5 and get a partial coloring satisfying (2). Similarly, $3\in L_{av}(v_2v_3,\varphi')$, that is, $L_{av}(v_2v_3,\varphi')=\{1,3,4,5\}$. In this case, we color $v_3v_4$ and $u_3v_3$ with 2 and 4. If the current does not satisfy (2), then recolor $u_3v_3$ with 5 and the resulted coloring satisfies (2).
If $\{4,5\}\not\subseteq L_{av}(v_2v_3,\varphi')$, then we assume, without loss of generality, that $4\not\in L_{av}(v_2v_3,\varphi')$. We now color $u_3v_3$ and $v_3v_4$ with 4 and 1. If $\{1,3\}\not\subseteq L_{av}(v_2v_3,\varphi')$, then it is easy to see that the current partial coloring satisfies (1). If
$L_{av}(v_2v_3,\varphi')=\{1,3,n_1,n_2\}$, where $\{n_1,n_2\}\cap \{1,3\}=\emptyset$, then $L_{av}(v_2,\varphi')=\{3,n_1,n_2\}$, otherwise the current partial coloring satisfies (2). In this case, we can also get a partial coloring satisfying (2) by recoloring $v_3v_4$ with 2.

\noindent\emph{\underline{Case 3}. $L_{av}(u_3v_4,\varphi')\cap \{1,2,3\}=\emptyset$.}

Without loss of generality, assume that $L_{av}(u_3v_4,\varphi')=\{4,5\}$. We claim that $1\in L_{av}(v_2v_3,\varphi')$. Otherwise, color $v_3v_4,u_3v_4$ and $u_3v_3$ with 1,4 and $\phi_5(u_3v_3)\in L_{av}(u_3v_3,\varphi')\setminus \{1,3,4\}$. If the current coloring $\phi_5$ does not satisfy (2), then recolor $u_3v_3$ with a color $\phi_6(u_3v_3)\in L_{av}(u_3v_3,\varphi')\setminus \{1,3,4,\phi_5(u_3v_3)\}$. It is easy to see that $\phi_6$ must satisfy (2), thus we let $\varphi':=\phi_6$.
Similarly, $2,3\in L_{av}(v_2v_3,\varphi')$. Assume that $L_{av}(v_2v_3,\varphi')=\{1,2,3,n_1\}$, where $n_1\not\in \{1,2,3\}$. We now color $v_3v_4,u_3v_4$ and $u_3v_3$ with 1,4 and a color $\phi_7(u_3v_3)\in L_{av}(u_3v_3,\varphi')\setminus \{1,2,3,4\}$. If $n_1\neq \phi_7(u_3v_3)$, then we recolor $v_3v_4$ with 2 if $\phi_7$ does not satisfy (2), and the resulted coloring satisfies (2), so $n_1=\phi_7(u_3v_3)$. If $L_{av}(u_3v_3,\varphi')\neq \{1,2,3,4,n_1\}$, then recolor $u_3v_3$ with a color from $L_{av}(u_3v_3,\varphi')\setminus\{1,2,3,4,n_1\}$ and the resulted coloring can be dealt with as above, so $L_{av}(u_3v_3,\varphi')=\{1,2,3,4,n_1\}$ and $n_1\neq 4$. In this case, we color $v_3v_4,u_3v_4$ and $u_3v_3$ with 1,5 and 4. If the current partial coloring does not satisfy (2), then we can construct another partial coloring that satisfies (2) by recoloring $v_3v_4$ with 2.

\vspace{2mm}\noindent \emph{\textbf{Stage 2}. Extend $\varphi^1$ to an $L$-total coloring $\varphi$ of $G$ without altering the assigned colors.}

\vspace{2mm} Note that $\varphi^1$ satisfies $|L_{av}(u_1v_1,\varphi^1)|=|L_{av}(v_1v_2,\varphi^1)|=2$, $|L_{av}(u_1v_2,\varphi^1)|=|L_{av}(u_2v_2,\varphi^1)|=5$ by the choice of $\varphi'$, and $|L_{av}(u_2v_2,\varphi^1)|,|L_{av}(v_2v_3,\varphi^1)|,|L_{av}(v_2,\varphi^1)|\geq 2$, and moreover, either
$L_{av}(v_2v_3,\varphi^1)\neq L_{av}(v_2,\varphi^1)$
or $L_{av}(v_1v_2,\varphi^1)\neq L_{av}(v_2,\varphi^1)$ if $|L_{av}(v_2,\varphi^1)|=2$, by the choice of $\varphi^1$ in Stage 1.

Without loss of generality, we assume that $|L_{av}(u_2v_2,\varphi^1)|=|L_{av}(v_2v_3,\varphi^1)|=|L_{av}(v_2,\varphi^1)|=2$ and $L_{av}(v_2v_3,\varphi^1)\neq L_{av}(v_2,\varphi^1)=\{1,2\}$ in the following arguments.

If $L_{av}(v_1v_2,\varphi^1)\cap L_{av}(u_2v_3,\varphi^1)\neq \emptyset$, then color $v_1v_2$ and $u_2v_3$ with $\mu(v_1v_2)=\mu(u_2v_3)\in L_{av}(v_1v_2,\varphi^1)\cap L_{av}(u_2v_3,\varphi^1)$. Since $L_{av}(v_2v_3,\varphi^1)\neq L_{av}(v_2,\varphi^1)$, we can color $v_2$ and $v_2v_3$ with $\mu(v_2)\in L_{av}(v_2,\varphi^1)\setminus \{\mu(v_1v_2)\}$ and $\mu(u_2v_3)\in L_{av}(u_2v_3,\varphi^1)\setminus \{\mu(u_2v_3)\}$ so that $\mu(v_2)\neq \mu(u_2v_3)$. We then color $u_1v_1,u_1v_2$ and $u_2v_2$ with $\mu(u_1v_1)\in L_{av}(u_1v_1,\varphi^1)\setminus \{\mu(v_1v_2)\}$, $\mu(u_1v_2)\in L_{av}(u_1v_2,\varphi^1)\setminus \{\mu(u_1v_1),\mu(v_1v_2),\mu(v_2),\mu(v_2v_3)\}$ and $\mu(u_2v_2)\in L_{av}(u_2v_2,\varphi^1)\setminus \{\mu(u_2v_3),\mu(v_2v_3),\mu(v_2),\mu(u_1v_2)\}$, respectively. Since $u_1,u_2$ and $u_3$ are 2-vertices, they are easily colored at the last stage. Therefore, we have an $L$-total coloring $\mu$ of $G$. In what follows, we assume that
$L_{av}(v_1v_2,\varphi^1)\cap L_{av}(u_2v_3,\varphi^1)=\emptyset$.

\noindent\emph{\underline{Case 1$'$}. $L_{av}(v_2v_3,\varphi^1)=\{1,3\}$ }

If $1\in L_{av}(u_1v_1,\varphi^1)$ and $L_{av}(v_1v_2,\varphi^1)\neq \{1,2\}$, then color $u_1v_1$ and $v_2v_3$ with 1, and color $v_2,v_1v_2,u_2v_3,u_2v_2$ and $u_1v_2$ with 2, $\mu_1(v_1v_2)\in L_{av}(v_1v_2,\varphi^1)\setminus \{1,2\}$,
$\mu_1(u_2v_3)\in L_{av}(u_2v_3,\varphi^1)\setminus \{1\}$, $\mu_1(u_2v_2)\in L_{av}(u_2v_2,\varphi^1)\setminus \{1,2,\mu_1(v_1v_2),\mu_1(u_2v_3)\}$ and
 $\mu_1(u_1v_2)\in L_{av}(u_1v_2,\varphi^1)\setminus \{1,2,\mu_1(v_1v_2),\mu_1(u_2v_2)\}$, respectively.
If $1\in L_{av}(u_1v_1,\varphi^1)$ and $L_{av}(v_1v_2,\varphi^1)= \{1,2\}$, then color $u_1v_1$ and $v_2$ with 1, and color $v_2v_3,v_1v_2,u_2v_3,u_2v_2$ and $u_1v_2$ with 3, 2,
$\mu_1(u_2v_3)\in L_{av}(u_2v_3,\varphi^1)\setminus \{3\}$, $\mu_1(u_2v_2)\in L_{av}(u_2v_2,\varphi^1)\setminus \{1,2,3,\mu_1(u_2v_3)\}$ and
$\mu_1(u_1v_2)\in L_{av}(u_1v_2,\varphi^1)\setminus \{1,2,3,\mu_1(u_2v_2)\}$, respectively.  In each case, we can extend $\mu_1$ to the 2-vertices $u_1,u_2$ and $u_3$ and get an $L$-total coloring of $G$. Therefore, $1\not\in L_{av}(u_1v_1,\varphi^1)$. Similarly, $2,3\not\in L_{av}(u_1v_1,\varphi^1)$. We assume, without loss of generality, that $L_{av}(u_1v_1,\varphi^1)=\{4,5\}$

If $4\not\in L_{av}(u_1v_2,\varphi^1)$, then color $u_1v_1$ and $v_1v_2$ with 4 and $\mu_2(v_1v_2)\in L_{av}(v_1v_2,\varphi^1)\setminus \{4\}$. If $\mu_2(v_1v_2)\neq 2$, then color $v_2,v_2v_3,u_2v_3,u_2v_2$ and $u_1v_2$ with 2, $\mu_2(v_2v_3)\in \{1,3\}\setminus \{\mu_2(v_1v_2)\}$, $\mu_2(u_2v_3)\in L_{av}(u_2v_3,\varphi^1)\setminus \{\mu_2(v_2v_3)\}$, $\mu_2(u_2v_2)\in L_{av}(u_2v_2,\varphi^1)\setminus \{2,\mu_2(v_1v_2),\mu_2(v_2v_3),\mu_2(u_2v_3)\}$ and
$\mu_2(u_1v_2)\in L_{av}(u_1v_2,\varphi^1)\setminus \{2,\mu_2(v_1v_2),\mu_2(v_2v_3),\mu_2(u_2v_2)\}$.
If $\mu_2(v_1v_2)=2$, then color $v_2,v_2v_3,u_2v_3,u_2v_2$ and $u_1v_2$ with 1, 3, $\mu_2(u_2v_3)\in L_{av}(u_2v_3,\varphi^1)\setminus \{3\}$, $\mu_2(u_2v_2)\in L_{av}(u_2v_2,\varphi^1)\setminus \{1,2,3,\mu_2(u_2v_3)\}$ and
$\mu_2(u_1v_2)\in L_{av}(u_1v_2,\varphi^1)\setminus \{1,2,3,\mu_2(u_2v_2)\}$.
In each case, we can extend $\mu_2$ to the 2-vertices $u_1,u_2$ and $u_3$ and get an $L$-total coloring of $G$. Therefore, $4\in L_{av}(u_1v_2,\varphi^1)$. Similarly, we have $1,2,3,5\in L_{av}(u_1v_2,\varphi^1)$, that is, $L_{av}(u_1v_2,\varphi^1)=\{1,2,3,4,5\}$. By similar arguments as above, we can also prove that $L_{av}(u_2v_2,\varphi^1)=\{1,2,3,4,5\}$, $L_{av}(v_1v_2,\varphi^1)\subseteq \{1,2,3,4,5\}$ and $L_{av}(u_2v_3,\varphi^1)\subseteq \{1,2,3,4,5\}$.

If $1\in L_{av}(v_1v_2,\varphi^1)$, then color $v_1v_2,v_2,v_2v_3,u_1v_1,u_1v_2$ and $u_2v_2$ with 1,2,3,4,5 and 4. If $L_{av}(u_2v_3,\varphi^1)\neq \{3,4\}$, then color $u_2v_3$ with a color in $L_{av}(u_2v_3,\varphi^1)\setminus\{3,4\}$. If $L_{av}(u_2v_3,\varphi^1)=\{3,4\}$, then recolor $u_1v_1,u_1v_2$ and $u_2v_2$ with 5,4 and 5, and color $u_2v_3$ with 4. In each case, we can extend the current coloring to the 2-vertices $u_1,u_2$ and $u_3$ and get an $L$-total coloring of $G$. By similar arguments as above, we can complete the proof of this lemma if $2\in L_{av}(v_1v_2,\varphi^1)$ or $3\in L_{av}(v_1v_2,\varphi^1)$. Therefore, we assume that $L_{av}(v_1v_2,\varphi^1)=\{4,5\}$. In this case, we color $v_1v_2,v_2,v_2v_3,u_1v_1,u_1v_2$ and $u_2v_2$ with 4,1,3,5,2 and 5. Since $L_{av}(v_1v_2,\varphi^1)\cap L_{av}(u_2v_3,\varphi^1)=\emptyset$, $5\not\in L_{av}(u_2v_3,\varphi^1)$. Hence, we color $u_2v_3$ with a color from
$L_{av}(u_2v_3,\varphi^1)\setminus \{3\}$ and then extend the coloring at this stage to $u_1,u_2$ and $u_3$ to obtain an $L$-total coloring of $G$.

\noindent\emph{\underline{Case 2$'$}. $L_{av}(v_2v_3,\varphi^1)=\{3,4\}$. }

By similar arguments as in the first part of Case 1$'$, one can prove that $L_{av}(u_1v_1,\varphi^1)\cap\{1,2,3,4\}=\emptyset$, so we assume that
$L_{av}(u_1v_1,\varphi^1)=\{5,6\}$. Since $|L_{av}(u_1v_2,\varphi^1)|=5$, $\{1,2,3,4,5,6\}\setminus L_{av}(u_1v_2,\varphi^1)\neq \emptyset$. Without loss of generality, assume that $1\not\in L_{av}(u_1v_2,\varphi^1)$ and color $v_2$ with 1.
If $L_{av}(v_1v_2,\varphi^1)=\{1,3\}$, then color $v_2v_3,v_1v_2,u_1v_1,u_2v_3,u_2v_2$ and $u_1v_2$
with 4, 3, 5, $\mu_3(u_2v_3)\in L_{av}(u_2v_3,\varphi^1)\setminus \{4\}$, $\mu_3(u_2v_2)\in L_{av}(u_2v_2,\varphi^1)\setminus \{1,3,4,\mu_3(u_2v_3)\}$ and
$\mu_3(u_1v_2)\in L_{av}(u_1v_2,\varphi^1)\setminus \{3,4,5,\mu_3(u_2v_2)\}$.
If $L_{av}(v_1v_2,\varphi^1)\neq \{1,3\}$, then color $v_2v_3,v_1v_2,u_1v_1,u_2v_3,u_2v_2$ and $u_1v_2$
with 3, $\mu_3(v_1v_2)\in L_{av}(v_1v_2,\varphi^1)\setminus \{1,3\}$, 5, $\mu_3(u_2v_3)\in L_{av}(u_2v_3,\varphi^1)\setminus \{3\}$, $\mu_3(u_2v_2)\in L_{av}(u_2v_2,\varphi^1)\setminus \{1,3,\mu_3(v_1v_2),\mu_3(u_2v_3)\}$ and
$\mu_3(u_1v_2)\in L_{av}(u_1v_2,\varphi^1)\setminus \{3,5,\mu_3(v_1v_2),\mu_3(u_2v_2)\}$. In each case, we can extend the partial coloring $\mu_3$ to the 2-vertices $u_1,u_2$ and $u_3$ and get an $L$-total coloring of $G$.
\end{proof}

\begin{lem}\label{de}
Suppose that $G$ contains a cycle $u_1u_2u_3u_4$ with $u_2u_4\in E(G)$ and $d(u_2)=d(u_4)=3$ and that
$G'=G-\{u_2,u_4\}$ has an $L'$-total coloring $\varphi'$ so that $L'(x)=L(x)$ for each $x\in VE(G')$, where $L$ is a list assignment of $G$.
If $$|L_{av}(u_2u_4,\varphi')|\geq 6, $$
$$\min\{|L_{av}(u_2,\varphi')|,|L_{av}(u_4,\varphi')|\}\geq 4,$$ $$\min\{|L_{av}(u_1u_2,\varphi')|,|L_{av}(u_2u_3,\varphi')|,|L_{av}(u_3u_4,\varphi')|,|L_{av}(u_1u_4,\varphi')|\}\geq 2$$
and $$L_{av}(u_1u_2,\varphi')\neq L_{av}(u_2u_3,\varphi')~when~|L_{av}(u_1u_2,\varphi')|=|L_{av}(u_2u_3,\varphi')|=2$$
then $\varphi'$ can be extended to an $L$-total coloring $\varphi$ of $G$ without altering the colors in $G'$.
\end{lem}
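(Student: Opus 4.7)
The plan is to extend $\varphi'$ by coloring the seven uncolored elements $u_2, u_4, u_1u_2, u_2u_3, u_3u_4, u_1u_4, u_2u_4$ in the order: the four cycle-edges first, then the two vertices $u_2, u_4$, and finally the chord $u_2u_4$. The key observation is that $u_2u_4$ is adjacent or incident to each of the six other uncolored elements; since $|L_{av}(u_2u_4,\varphi')|\geq 6$, it suffices to arrange that at most five colors of $L_{av}(u_2u_4)$ appear among those six earlier elements.

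To reuse colors on the cycle, I first attempt to color opposite edges identically, since they share no endpoint. In Case A, where $L_{av}(u_1u_2)\cap L_{av}(u_3u_4)\neq\emptyset$ (the dual case $L_{av}(u_2u_3)\cap L_{av}(u_1u_4)\neq\emptyset$ is handled symmetrically), pick $\alpha$ in the intersection, set $u_1u_2 = u_3u_4 = \alpha$, and then pick $\beta\in L_{av}(u_2u_3)\setminus\{\alpha\}$ and $\gamma\in L_{av}(u_1u_4)\setminus\{\alpha\}$; both are available because the lists have size at least $2$. The four cycle-edges now carry at most three distinct colors. Color $u_2$ from $L_{av}(u_2)\setminus\{\alpha,\beta\}$ (nonempty, as $|L_{av}(u_2)|\geq 4$) and $u_4$ from $L_{av}(u_4)\setminus\{\alpha,\gamma,\varphi(u_2)\}$ (nonempty, as $|L_{av}(u_4)|\geq 4$); the six colored elements then use at most five distinct colors, so $u_2u_4$ can be colored from $L_{av}(u_2u_4)$ with at least one color to spare.

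In Case B, where both $L_{av}(u_1u_2)\cap L_{av}(u_3u_4)$ and $L_{av}(u_2u_3)\cap L_{av}(u_1u_4)$ are empty, the four cycle-edges are forced to receive four distinct colors $\alpha_1,\alpha_2,\beta_1,\beta_2$. The goal becomes arranging $\varphi(u_2)\in\{\alpha_2,\beta_2\}$, or $\varphi(u_4)\in\{\alpha_1,\beta_1\}$, or having some element's color fall outside $L_{av}(u_2u_4)$, so that again at most five colors of $L_{av}(u_2u_4)$ are consumed. I plan to achieve one of these by a sub-case analysis based on how the vertex-lists meet the four edge-lists, exploiting the freedom in choosing each $\alpha_i,\beta_i$ within a size-$\geq 2$ list, the size-$\geq 4$ vertex-lists, and the size-$\geq 6$ bound on $L_{av}(u_2u_4)$. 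The hypothesis $L_{av}(u_1u_2)\neq L_{av}(u_2u_3)$ when both have size exactly $2$ clears the tightest configuration, where the two edges at $u_2$ would otherwise be forced into a common two-element set, blocking the later vertex-coloring step. The main obstacle is precisely this Case B, whose careful sub-case work forms the technical core of the argument; once resolved, the last step of coloring $u_2u_4$ from its residual list is immediate.
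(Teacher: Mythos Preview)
Your Case~A is correct and clean: matching opposite cycle-edges saves one color, and the greedy finish for $u_2,u_4,u_2u_4$ goes through exactly as you describe. This is a genuinely different organizing principle from the paper's proof, which never splits on whether opposite edges share a color; instead, after shrinking all lists to their minimum sizes, the paper splits on the relationship between the two \emph{adjacent} lists $L_{av}(u_1u_2)$ and $L_{av}(u_2u_3)$ (disjoint versus overlapping in one color), and then sub-cases on $L_{av}(u_1u_4)$. That decomposition uses the hypothesis $L_{av}(u_1u_2)\neq L_{av}(u_2u_3)$ from the very first line, whereas in your scheme the hypothesis only becomes relevant in Case~B.

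The gap is that Case~B is only a plan. You correctly identify the target---force a coincidence $\varphi(u_2)\in\{\alpha_2,\beta_2\}$, $\varphi(u_4)\in\{\alpha_1,\beta_1\}$, or push some color outside $L_{av}(u_2u_4)$---but you have not shown that one of these is always achievable. The obstruction is real: with all four edge-lists inside the six-element set $L_{av}(u_2u_4)$, the two size-$4$ vertex-lists can be arranged so that the ``obvious'' greedy choices fail, and one must go back and re-select edge colors in coordination with the vertex lists. The paper's proof handles exactly this kind of situation by explicit recoloring moves (e.g.\ swapping the colors on $u_1u_2,u_1u_4$ or cycling $u_1u_2,u_2u_3,u_3u_4,u_1u_4$ through a second pattern) whenever the residual lists for $u_2,u_4,u_2u_4$ collapse to a common two-element set. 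Your proposal does not yet contain any such mechanism. Also, your stated reason for needing the hypothesis---that equal lists at $u_2$ would ``block the later vertex-coloring step''---is not quite right: $|L_{av}(u_2)|\ge 4$ always leaves room for $u_2$; the hypothesis is needed to guarantee enough freedom in the \emph{edge} choices at $u_2$ so that the final coincidence for $u_2u_4$ can be engineered.
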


\begin{proof}
Without loss of generality, we assume that $|L_{av}(u_2u_4,\varphi')|=6$, $|L_{av}(u_2,\varphi')|=|L_{av}(u_4,\varphi')|=4$ and $|L_{av}(u_1u_2,\varphi')|=|L_{av}(u_2u_3,\varphi')|=|L_{av}(u_3u_4,\varphi')|=|L_{av}(u_1u_4,\varphi')|=2$ (otherwise we can shorten some lists that assigned to the elements of $VE(G)$ so that those conditions are satisfied), and then
split the proofs into the following two cases.\\[.2em]
                       \noindent \emph{\underline{Case 1}. $L_{av}(u_1u_2,\varphi')=\{1,2\}$ and $L_{av}(u_2u_3,\varphi')=\{3,4\}$.}\\[.2em]
\indent If $L_{av}(u_1u_4,\varphi')=\{1,2\}$, then color $u_1u_2$ and $u_1u_4$ with 1 and 2, $u_3u_4$ with $\varphi''(u_3u_4)\in L_{av}(u_3u_4,\varphi')\setminus \{2\}$ and $u_2u_3$ with $\varphi''(u_2u_3)\in \{3,4\}\setminus \{\varphi''(u_3u_4)\}$. Denote the extended partial coloring by $\varphi''$.
One can see that $|L_{av}(u_2,\varphi'')|, |L_{av}(u_4,\varphi'')|, |L_{av}(u_2u_4,\varphi'')|\geq 2$. If $L_{av}(u_2,\varphi'')=L_{av}(u_4,\varphi'')=L_{av}(u_2u_4,\varphi'')$, then recolor $u_1u_2$ and $u_1u_4$ with 2 and 1 when $\varphi''(u_3u_3)\neq 1$, or recolor $u_2u_3$ with the color in $\{3,4\}\setminus \{\varphi''(u_2u_3)\}$ when $\varphi''(u_3u_3)=1$. We still the denote current coloring by $\varphi''$ but now we do not have $L_{av}(u_2,\varphi'')=L_{av}(u_4,\varphi'')=L_{av}(u_2u_4,\varphi'')$. Therefore, we can easily extend $\varphi''$ to an $L$-total coloring $\varphi$ of $G$ by coloring $u_2,u_4$ and $u_2u_4$ properly.

If $L_{av}(u_1u_4,\varphi')\cap L_{av}(u_2u_3,\varphi')\neq \emptyset$, then color $u_2u_3$ and $u_1u_4$ with $\varphi''(u_2u_3)=\varphi''(u_1u_4)\in L_{av}(u_1u_4,\varphi')\cap L_{av}(u_2u_3,\varphi')$, $u_1u_2$ with $\varphi''(u_1u_2)\in L_{av}(u_1u_2,\varphi')\setminus \{\varphi''(u_2u_3)\}$, $u_3u_4$ with $\varphi''(u_3u_4)\in L_{av}(u_3u_4,\varphi')\setminus \{\varphi''(u_2u_3)\}$ and denote the extended coloring by $\varphi''$. One can see that $|L_{av}(u_2,\varphi'')|,$\\$|L_{av}(u_4,\varphi'')|\geq 2$ and $|L_{av}(u_2u_4,\varphi'')|\geq 3$. Therefore, $\varphi''$ can be easily extended to an $L$-total coloring $\varphi$ of $G$ by coloring $u_1u_2,u_2,u_2u_4$ and $u_4$ properly.

If $L_{av}(u_1u_4,\varphi')\cap L_{av}(u_2u_3,\varphi')=\emptyset$ and $L_{av}(u_1u_4,\varphi')\not=\{1,2\}$, then color $u_1u_4$ with $\varphi''(u_1u_4)\in L(u_1u_4,\varphi')\setminus \{1,2\}$, $u_3u_4$ with $\varphi''(u_3u_4)\in L(u_3u_4,\varphi')\setminus \{\varphi''(u_1u_4)\}$, $u_2u_3$ with $\varphi''(u_2u_3)\in L(u_2u_3,\varphi')\setminus \{\varphi''(u_3u_4)\}$ and denote the extended coloring by $\varphi''$. One can see that $|L_{av}(u_1u_2,\varphi'')|,|L_{av}(u_4,\varphi'')|\geq 2$ and $|L_{av}(u_2,\varphi'')|,|L_{av}(u_2u_4,\varphi'')|\geq 3$. Therefore, $\varphi''$ can be easily extended to an $L$-total coloring $\varphi$ of $G$ by coloring $u_1u_2,u_2,u_2u_4$ and $u_4$ properly.\\[.2em]
                       \noindent \emph{\underline{Case 2}. $L_{av}(u_1u_2,\varphi')=\{1,2\}$ and $L_{av}(u_2u_3,\varphi')=\{1,3\}$.}\\[.2em]
\indent By similar arguments as in the second part of Case 1, one can show that $L_{av}(u_1u_4,\varphi')\cap L_{av}(u_2u_3,\varphi')=\emptyset$ and $L_{av}(u_1u_2,\varphi')\cap L_{av}(u_3u_4,\varphi')=\emptyset$.

If $2\in L_{av}(u_1u_4,\varphi')$, then we assume that $L_{av}(u_1u_4,\varphi')=\{2,4\}$. If $L_{av}(u_3u_4,\varphi')\neq \{3,4\}$, then color $u_2u_3$ and $u_1u_4$ with $3$ and $4$, $u_3u_4$ with $\varphi''(u_3u_4)\in L_{av}(u_3u_4,\varphi')\setminus\{3,4\}$ and denote the extended coloring by $\varphi$.
One can see that $|L_{av}(u_1u_2,\varphi'')|,|L_{av}(u_4,\varphi'')|\geq 2$ and $|L_{av}(u_2,\varphi'')|,|L_{av}(u_2u_4,\varphi'')|\geq 3$.  Therefore, $\varphi''$ can be easily extended to an $L$-total coloring $\varphi$ of $G$ by coloring $u_1u_2,u_2,u_2u_4$ and $u_4$ properly. If $L_{av}(u_3u_4,\varphi')= \{3,4\}$, then we first color $u_1u_2,u_2u_3,u_3u_4$ and $u_1u_4$ with 1, 3, 4 and 2, and denote the extended coloring by $\varphi''$. It is easy to see that
$|L_{av}(u_2,\varphi'')|,|L_{av}(u_4,\varphi'')|,|L_{av}(u_2u_4,\varphi'')|\geq 2$. If the three sets
$L_{av}(u_2,\varphi''),L_{av}(u_4,\varphi''),L_{av}(u_2u_4,\varphi'')$ are not the same or $L_{av}(u_2,\varphi'')=L_{av}(u_4,\varphi'')=L_{av}(u_2u_4,\varphi'')$ and $|L_{av}(u_2,\varphi'')|\geq 3$, then $\varphi''$ can be easily extended to an $L$-total coloring $\varphi$ of $G$. If $L_{av}(u_2,\varphi'')=L_{av}(u_4,\varphi'')=L_{av}(u_2u_4,\varphi'')=\{5,6\}$, then we revise the coloring $\varphi''$ by recoloring $u_1u_2,u_2u_3,u_3u_4$ and $u_4u_1$ by 2, 1, 3 and 4. After that, we have $L_{av}(u_2,\varphi'')=\{3,5,6\}$, $L_{av}(u_4,\varphi'')=\{2,5,6\}$ and $L_{av}(u_2u_4,\varphi'')=\{5,6\}$, so we extend $\varphi''$ to an $L$-total coloring of $G$ by coloring $u_2,u_4$ and $u_2u_4$ with $3,2$ and $5$.

If $2\not\in L_{av}(u_1u_4,\varphi')$, then we assume that $L_{av}(u_1u_4,\varphi')=\{4,5\}$. We now color $u_2u_3$ with 3, $u_3u_4$ with $\varphi''(u_3u_4)\in L_{av}(u_3u_4,\varphi'')\setminus \{3\}$, $u_1u_4$ with $\varphi''(u_1u_4)\in L_{av}(u_1u_4,\varphi'')\setminus \{\varphi''(u_3u_4)\}$ and denote the extended coloring by $\varphi''$. It is easy see that $|L_{av}(u_1u_2,\varphi'')|,|L_{av}(u_4,\varphi'')|\geq 2$ and $|L_{av}(u_2,\varphi'')|,|L_{av}(u_2u_4,\varphi'')|\geq 3$.  Therefore, $\varphi''$ can be easily extended to an $L$-total coloring $\varphi$ of $G$ by coloring $u_1u_2,u_2,u_2u_4$ and $u_4$ properly.
\end{proof}

\begin{lem}\label{f}
Suppose that $G$ contains a cycle $u_1u_2u_3u_4$ so that $u_2u_4\in E(G)$, $d(u_2)=d(u_4)=3$ and $u_3$ is adjacent to a vertex $v$. Let
$\varphi'$ be a partial $L$-total coloring of $G$ so that the uncolored elements under $\varphi'$ are $u_1u_2,u_2u_3,u_3u_4,u_1u_4,u_2u_4,u_3v,u_2,u_3,u_4$ and $v$, where $L$ is a list assignment of $G$. If
$$|L_{av}(u_2u_4,\varphi')|\geq 6,$$
$$\min\{|L_{av}(u_2,\varphi')|,|L_{av}(u_4,\varphi')|\}\geq 5,$$
$$\min\{|L_{av}(u_2u_3,\varphi')|,|L_{av}(u_3u_4,\varphi')|\}\geq 4$$ $$\min\{|L_{av}(u_1u_2,\varphi')|,|L_{av}(u_1u_4,\varphi')|,|L_{av}(u_3v,\varphi')|,|L_{av}(u_3,\varphi')|,|L_{av}(v,\varphi')|\}\geq 2$$
and $$at~least~two~of~L_{av}(u_3v,\varphi'),L_{av}(u_3,\varphi'),L_{av}(v,\varphi')~are~distinct~when$$
$$|L_{av}(u_3v,\varphi')|=|L_{av}(u_3,\varphi')|=|L_{av}(v,\varphi')|=2,$$
then $\varphi'$ can be extended to an $L$-total coloring $\varphi$ of $G$ without altering the assigned colors.
\end{lem}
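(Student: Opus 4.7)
The plan is to reduce Lemma~\ref{f} to Lemma~\ref{de}. I first (call this Stage~I) colour the three ``extra'' elements $u_3$, $u_3v$, $v$; after this extension $\varphi''$, the remaining uncoloured elements are exactly $\{u_2,u_4,u_1u_2,u_2u_3,u_3u_4,u_1u_4,u_2u_4\}=VE(G)\setminus VE(G-\{u_2,u_4\})$, i.e.\ the configuration handled by Lemma~\ref{de}. Note that $u_3$, $u_3v$, $v$ form a triangle in the total graph of $G$, since $u_3$ and $v$ are adjacent in $G$ via $u_3v\in E(G)$ and both are incident to that edge. Choosing $\varphi''(u_3),\varphi''(u_3v),\varphi''(v)$ amounts to list-$2$-colouring $K_3$, and the hypothesis that at least two of the three lists are distinct when all have size exactly~$2$ is precisely the necessary and sufficient condition for such a colouring to exist.

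Having chosen a triangle colouring, I verify the numerical hypotheses of Lemma~\ref{de} for $\varphi''$. The element $u_2u_4$ has no total-graph neighbour among $\{u_3,u_3v,v\}$, so $|L_{av}(u_2u_4,\varphi'')|\geq 6$. The vertex $u_2$ acquires only $u_3$ as a new coloured neighbour, giving $|L_{av}(u_2,\varphi'')|\geq|L_{av}(u_2,\varphi')|-1\geq 4$, and similarly $|L_{av}(u_4,\varphi'')|\geq 4$. The edges $u_1u_2,u_1u_4$ are incident to none of $u_3,u_3v,v$, so their lists are unchanged and of size $\geq 2$. The edges $u_2u_3,u_3u_4$ each lose at most the two colours $\varphi''(u_3),\varphi''(u_3v)$, so their lists still have size $\geq 4-2=2$.

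The only subtle condition of Lemma~\ref{de} is $L_{av}(u_1u_2,\varphi'')\ne L_{av}(u_2u_3,\varphi'')$ when both have size exactly~$2$. If $|L_{av}(u_1u_2,\varphi')|\geq 3$ this is vacuous, and if $|L_{av}(u_2u_3,\varphi')|\geq 5$ then $|L_{av}(u_2u_3,\varphi'')|\geq 3$. So assume $L_{av}(u_1u_2,\varphi')=\{a,b\}$ and $|L_{av}(u_2u_3,\varphi')|=4$; if further $\{a,b\}\subseteq L_{av}(u_2u_3,\varphi')$, set $B:=L_{av}(u_2u_3,\varphi')\setminus\{a,b\}$. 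Then the only Stage~I choice that sabotages the condition is $\{\varphi''(u_3),\varphi''(u_3v)\}=B$. If either $L_{av}(u_3,\varphi')$ or $L_{av}(u_3v,\varphi')$ contains a colour outside $L_{av}(u_2u_3,\varphi')$, using such a colour forces $|L_{av}(u_2u_3,\varphi'')|\geq 3$; otherwise a short case analysis on how the three $2$-element lists on $u_3,u_3v,v$ intersect $\{a,b\}\cup B$ yields a proper triangle colouring avoiding $B$, except in the configuration $L_{av}(u_3,\varphi')=L_{av}(u_3v,\varphi')=B$. In all non-pathological cases, Lemma~\ref{de} then extends $\varphi''$ to an $L$-total coloring of $G$.

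The main obstacle is the pathological subcase $L_{av}(u_3,\varphi')=L_{av}(u_3v,\varphi')=B$ (with $\{a,b\}\subseteq L_{av}(u_2u_3,\varphi')$), where every proper triangle colouring forces $\{\varphi''(u_3),\varphi''(u_3v)\}=B$. Here I abandon the reduction to Lemma~\ref{de} and finish by hand: colour $u_1u_2$ and $u_2u_3$ with $a,b$ in some order (using $L_{av}(u_1u_2,\varphi')=\{a,b\}$ and $\{a,b\}\subseteq L_{av}(u_2u_3,\varphi')$), then colour the triangle on $u_3,u_3v,v$, and finally extend greedily to $u_2,u_4,u_3u_4,u_1u_4,u_2u_4$, exploiting the slack provided by $|L_{av}(u_2u_4,\varphi')|\geq 6$, $|L_{av}(u_4,\varphi')|\geq 5$, and $|L_{av}(u_2,\varphi')|\geq 5$. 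Verifying that this residual greedy extension never fails is routine but requires some care, mirroring the case analysis in the proof of Lemma~\ref{de}.
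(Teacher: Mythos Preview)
Your overall strategy---colour the triangle $u_3,u_3v,v$ first and then invoke Lemma~\ref{de}---is exactly what the paper does. The paper organises the cases differently (it splits on whether $L_{av}(u_3v,\varphi')=L_{av}(u_3,\varphi')$ rather than on your ``pathological'' criterion), but in the non-degenerate situations both arguments amount to the same thing: find a proper colouring of the triangle for which $\{\varphi''(u_3),\varphi''(u_3v)\}$ does not collapse $L_{av}(u_2u_3,\varphi'')$ onto $L_{av}(u_1u_2,\varphi'')$, and apply Lemma~\ref{de}.

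The genuine gap is in your pathological subcase $L_{av}(u_3,\varphi')=L_{av}(u_3v,\varphi')=B$. Your plan there---colour $u_1u_2,u_2u_3$ with $\{a,b\}$, then the triangle, then extend greedily---does not close on its own. After fixing $u_1u_4$ and $u_3u_4$ you are left with the three mutually adjacent elements $u_2,u_4,u_2u_4$, each with an available list of size~$\geq 2$; nothing you have arranged prevents all three lists from coinciding as a common $2$-set, in which case no proper colouring exists. Saying this ``mirrors the case analysis in the proof of Lemma~\ref{de}'' is not enough, because Lemma~\ref{de} has no analogue of the elements $u_3,u_3v$ to recolour.

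The paper's resolution of this case is worth noting: it does not pre-commit to $u_1u_2,u_2u_3$ at all. Instead it colours the triangle, then uses the $2$-edge-choosability of the $4$-cycle $u_1u_2u_3u_4$ to colour all four cycle edges at once, and only then looks at $u_2,u_4,u_2u_4$. If those three lists do coincide, it \emph{swaps the colours on $u_3$ and $u_3v$}. This swap leaves the cycle-edge colouring proper (since $u_2u_3$ and $u_3u_4$ forbid both colours of $B$ regardless of which sits on $u_3$), but it changes $\varphi''(u_3)$ and hence perturbs the available lists of $u_2$ and $u_4$ without touching that of $u_2u_4$, breaking the three-way tie. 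You have exactly the same freedom available (since $L_{av}(u_3,\varphi')=L_{av}(u_3v,\varphi')$ in your pathological case), so the fix transplants directly into your argument---but it needs to be stated, not deferred as ``routine''.
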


\begin{proof}
Without loss of generality, we assume that
$|L_{av}(u_2u_4,\varphi')|=6,$
$|L_{av}(u_2,\varphi')|=|L_{av}(u_4,\varphi')|=5,$
$|L_{av}(u_2u_3,\varphi')|=|L_{av}(u_3u_4,\varphi')|=4$ and $|L_{av}(u_1u_2,\varphi')|=|L_{av}(u_1u_4,\varphi')|=|L_{av}(u_3v,\varphi')|=|L_{av}(u_3,\varphi')|=|L_{av}(v,\varphi')|=2.$

If $L_{av}(u_3v,\varphi')=L_{av}(u_3,\varphi')=\{1,2\}$, then we color $u_3$ and $u_3v$ with $1$ and $2$, and then color $v$ with a color from $L_{av}(v,\varphi')$ that is different with 1 and 2. Denote the current partial coloring still by $\varphi'$. We then have
$|L_{av}(u_2u_4,\varphi')|=6, |L_{av}(u_1u_2,\varphi')|=|L_{av}(u_1u_4,\varphi')|=2$, $|L_{av}(u_2,\varphi')|, |L_{av}(u_4,\varphi')|\geq 4$ and $|L_{av}(u_2u_3,\varphi')|,|L_{av}(u_3u_4,\varphi')|\geq 2$. Without loss of generality, assume that $|L_{av}(u_2,\varphi')|=|L_{av}(u_4,\varphi')|=4$ and $|L_{av}(u_2u_3,\varphi')|=|L_{av}(u_3u_4,\varphi')|=2$.
Since every 4-cycle is 2-choosable, we color each edge of the cycle $u_1u_2u_3u_4$ from its available list and denote the coloring at this stage by $\varphi''$. It is easy to see that
$|L_{av}(u_2,\varphi'')|,|L_{av}(u_4,\varphi'')|,|L_{av}(u_2u_4,\varphi'')|\geq 2$. If
at least two of $L_{av}(u_2,\varphi''), L_{av}(u_4,\varphi'')$ and $L_{av}(u_2u_4,\varphi'')$ are distinct or $\max\{|L_{av}(u_2,\varphi'')|,|L_{av}(u_4,\varphi'')|,|L_{av}(u_2u_4,\varphi'')|\}\geq 3$, then $\varphi''$ can be easily extended to an $L$-total coloring of $G$. If
$L_{av}(u_2,\varphi'')=L_{av}(u_4,\varphi'')=L_{av}(u_2u_4,\varphi'')=2$, then exchange the colors on $u_3$ and $u_3v$ and denote this coloring by $\varphi'''$. This operation does not disturb the properness of the colors on the edges of the cycle $u_1u_2u_3u_4$, but implies that
at least two of $L_{av}(u_2,\varphi'''), L_{av}(u_4,\varphi''')$ and $L_{av}(u_2u_4,\varphi''')$ are distinct if $|L_{av}(u_2,\varphi''')|=|L_{av}(u_4,\varphi''')|=|L_{av}(u_2u_4,\varphi''')|=2$. Therefore, $\varphi'''$ can be extended to an $L$-total coloring $\varphi$ of $G$.

If $L_{av}(u_3v,\varphi')=\{1,2\}$ and $L_{av}(u_3,\varphi')=\{1,3\}$, then there are two ways to color $u_3$ and $u_3v$ so that $v$ can be colored from its available list so that the color assigned to $v$ is different with the colors assigned to $u_3$ and $u_3v$. Without loss of generality, assume the above two ways of coloring are as follows: color $u_3$ and $u_3v$ with 1 and 3, or with 2 and 1. We now color $u_3$ and $u_3v$ with 1 and 3, and then color $v$ properly.  Denote the current coloring by $\varphi''$. Suppose that $L_{av}(u_1u_2,\varphi')=\{a,b\}$. If $L_{av}(u_2u_3,\varphi')\not=\{1,3,a,b\}$ or $\{a,b\}\cap \{1,3\}\neq \emptyset$, then $L_{av}(u_1u_2,\varphi'')\neq L_{av}(u_2u_3,\varphi'')$, therefore, by Lemma \ref{de}, $\varphi''$ can be extended to an $L$-total coloring of $G$. If
$L_{av}(u_2u_3,\varphi')=\{1,3,a,b\}$ and $\{a,b\}\cap \{1,3\}=\emptyset$, then recolor $v_3$ and $v$ by 2 and 1, and recolor $v$ properly. Denote this coloring by $\varphi'''$. We then have $L_{av}(u_2u_3,\varphi''')=\{3,a,b\}\setminus \{2\}\neq \{a,b\}=L_{av}(u_1u_2,\varphi''')$, so by Lemma \ref{de}, $\varphi'''$ can be extended to an $L$-total coloring of $G$.

If $L_{av}(u_3v,\varphi')=\{1,2\}$ and $L_{av}(u_3,\varphi')=\{3,4\}$, then there are two ways to color $u_3$ and $u_3v$ so that $v$ can be colored from its available list so that the color assigned to $v$ is different with the colors assigned to $u_3$ and $u_3v$. Therefore, we can do similar arguments as above to complete the proof.
\end{proof}

\begin{lem}\label{g}
Suppose that $G$ contains a cycle $u_1u_2u_3u_4$ so that $u_1u_3,u_2u_4\in E(G)$ and $d(u_2)=d(u_4)=3$. Let
$\varphi'$ be a partial $L$-total coloring of $G$ so that the uncolored elements under $\varphi'$ are $u_1u_2,u_2u_3,u_3u_4,u_1u_4,u_2u_4,u_1u_3,u_1,u_2,u_3$ and $u_4$, where $L$ is a list assignment of $G$. If
$$\min\{|L_{av}(u_2,\varphi')|,|L_{av}(u_4,\varphi')|,|L_{av}(u_2u_4,\varphi')|\}\geq 6,$$
$$\min\{|L_{av}(u_1u_2,\varphi')|,|L_{av}(u_2u_3,\varphi')|,|L_{av}(u_3u_4,\varphi')|,|L_{av}(u_1u_4,\varphi')|\}\geq 4$$ $$\min\{|L_{av}(u_1,\varphi')|,|L_{av}(u_3,\varphi')|,|L_{av}(u_1u_3,\varphi')|\}\geq 2$$
and $$at~least~two~of~L_{av}(u_1,\varphi'),L_{av}(u_3,\varphi'),L_{av}(u_1u_3,\varphi')~are~distinct~when$$
$$|L_{av}(u_1,\varphi')|=|L_{av}(u_3,\varphi')|=|L_{av}(u_1u_3,\varphi')|=2,$$
then $\varphi'$ can be extended to an $L$-total coloring $\varphi$ of $G$ without altering the assigned colors.
\end{lem}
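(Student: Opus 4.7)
The plan is to reduce to Lemma \ref{de}. Since $u_1$, $u_3$, and $u_1u_3$ are pairwise incident or adjacent in $G$, they must receive pairwise distinct colors in any total coloring; once they have been colored, the uncolored elements of $G$ are precisely $u_2,u_4,u_2u_4,u_1u_2,u_2u_3,u_3u_4,u_1u_4$, which is exactly the configuration handled by Lemma \ref{de}.

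The first step is to color the triple $\{u_1,u_3,u_1u_3\}$ from their available lists, each of size at least $2$. This is a list-coloring of $K_3$ with size-$2$ lists, whose only obstruction is that all three lists coincide; the hypothesis that at least two of $L_{av}(u_1,\varphi'),L_{av}(u_3,\varphi'),L_{av}(u_1u_3,\varphi')$ are distinct in the tight case is precisely what rules this out. Let $\varphi''$ denote the resulting partial $L$-coloring. A routine accounting of colors removed gives $|L_{av}(u_2u_4,\varphi'')|\geq 6$ (since $u_2u_4$ is neither incident nor adjacent to any of $u_1,u_3,u_1u_3$), $|L_{av}(u_2,\varphi'')|,|L_{av}(u_4,\varphi'')|\geq 4$ (each losing at most $\varphi''(u_1)$ and $\varphi''(u_3)$), and each of the four side-edge lists $L_{av}(u_1u_2,\varphi''),L_{av}(u_2u_3,\varphi''),L_{av}(u_3u_4,\varphi''),L_{av}(u_1u_4,\varphi'')$ has size at least $2$ (each losing $\varphi''(u_1u_3)$ and the color of whichever of $u_1,u_3$ it is incident to). These are exactly the cardinality hypotheses of Lemma \ref{de}.

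It remains to secure the distinctness hypothesis $L_{av}(u_1u_2,\varphi'')\neq L_{av}(u_2u_3,\varphi'')$ (when both have size $2$); by the symmetry of the configuration under swapping $u_2$ and $u_4$, it would equally suffice to obtain the analogous inequality for $L_{av}(u_1u_4,\varphi'')$ and $L_{av}(u_3u_4,\varphi'')$. If an initial choice of $\varphi''$ violates both, we refine the triangle coloring --- for instance by swapping the colors on $u_1$ and $u_1u_3$ when this is admissible, or by replacing $\varphi''(u_1)$ or $\varphi''(u_3)$ by an alternative color in its original list --- in a way that perturbs only one of the two shrunken side-edge lists and so breaks the unwanted equality. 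A short case analysis, analogous to the final case analysis in the proof of Lemma \ref{f}, confirms that some proper triangle coloring meeting the distinctness condition always exists, and Lemma \ref{de} then extends $\varphi''$ to the desired $L$-total coloring of $G$. The principal obstacle is this last distinctness condition: the size bounds after the triangle coloring are automatic, but arranging the two relevant side-edge lists to be unequal requires the case-by-case manipulation just described.
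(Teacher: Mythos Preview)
Your proposal is correct and follows essentially the same approach as the paper: color the triangle $\{u_1,u_3,u_1u_3\}$ first (using the hypothesis that the three size-$2$ lists are not all equal), check that the residual list sizes meet the cardinality conditions of Lemma~\ref{de}, and then argue that the triangle coloring can be chosen or adjusted so that $L_{av}(u_1u_2,\varphi'')\neq L_{av}(u_2u_3,\varphi'')$ in the tight case. The paper carries out that last step by an explicit case split on how $L_{av}(u_1u_3,\varphi')$ overlaps $L_{av}(u_3,\varphi')$ (equal, sharing one color, or disjoint), using precisely the moves you describe---swapping two of the three assigned colors or replacing one by an unused alternative---so your sketch accurately anticipates both the structure and the substance of the argument.
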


\begin{proof}
Without loss of generality, we assume that
$|L_{av}(u_2,\varphi')|=|L_{av}(u_4,\varphi')|=|L_{av}(u_2u_4,\varphi')|=6,$
$|L_{av}(u_1u_2,\varphi')|=|L_{av}(u_2u_3,\varphi')|=|L_{av}(u_3u_4,\varphi')|=|L_{av}(u_1u_4,\varphi')|=4$ and $|L_{av}(u_1,\varphi')|=|L_{av}(u_3,\varphi')|=|L_{av}(u_1u_3,\varphi')|=2$.

If $L_{av}(u_1u_3,\varphi')=L_{av}(u_3,\varphi')=\{1,2\}$, then color $u_3$ and $u_1u_3$ with $1$ and $2$, $u_1$ with $\varphi''(u_1)\in L_{av}(u_1,\varphi')\setminus \{1,2\}\neq \emptyset$ and denote the current coloring by $\varphi''$. Without loss of generality, assume that $\varphi''(u_1)=3$.
It is easy to see that
$|L_{av}(u_2u_4,\varphi'')|=6$,
$|L_{av}(u_2,\varphi'')|,|L_{av}(u_4,\varphi'')|\geq 4$ and
$|L_{av}(u_1u_2,\varphi'')|,|L_{av}(u_2u_3,\varphi'')|,|L_{av}(u_3u_4,\varphi'')|,|L_{av}(u_1u_4,\varphi'')|\geq 2$. If $\{1,2\}\not\subseteq L_{av}(u_2u_3,\varphi')$, or $\{2,3\}\not\subseteq L_{av}(u_1u_2,\varphi')$, or $L_{av}(u_2u_3,\varphi')\setminus \{1,2\}\neq L_{av}(u_1u_2,\varphi')\setminus \{2,3\}$, then by Lemma \ref{de}, $\varphi''$ can be extended to an $L$-total coloring of $G$. If $L_{av}(u_1u_2,\varphi')=\{2,3,a,b\}$,
$L_{av}(u_2u_3,\varphi')=\{1,2,a,b\}$ and $\{a,b\}\cap \{1,2,3\}\neq \emptyset$, then exchange the colors on $u_3$ and $u_1u_3$, and denote this coloring by $\varphi'''$. Since $|L_{av}(u_2u_4,\varphi''')|=6$,
$|L_{av}(u_2,\varphi''')|,|L_{av}(u_4,\varphi''')|\geq 4$,
$|L_{av}(u_1u_2,\varphi''')|\geq 3$ and $|L_{av}(u_3u_4,\varphi''')|,$\\$|L_{av}(u_1u_4,\varphi''')|,|L_{av}(u_2u_3,\varphi''')|\geq 2$, by Lemma \ref{de}, $\varphi'''$ can be extended to an $L$-total coloring $\varphi$ of $G$.

If $L_{av}(u_1u_3,\varphi')=\{1,3\}$ and $L_{av}(u_3,\varphi')=\{1,2\}$, then we shall assume that $L_{av}(u_1,\varphi')\neq \{1,3\}$ (otherwise we come back to the above case).
If $L_{av}(u_1,\varphi')\setminus \{1,2,3\}\neq \emptyset$, then color $u_1$ with a color in $L_{av}(u_1,\varphi')\setminus \{1,2,3\}$, say 4, and color $u_3$ and $u_1u_3$ with 1 and 3. Denote the current coloring by $\varphi''$. If $\{1,3\}\not\subseteq L_{av}(u_2u_3,\varphi')$, or $\{3,4\}\not\subseteq L_{av}(u_1u_2,\varphi')$, or $L_{av}(u_2u_3,\varphi')\setminus \{1,3\}\neq L_{av}(u_1u_2,\varphi')\setminus \{3,4\}$, then by Lemma \ref{de} and similar arguments as before, $\varphi''$ can be extended to an $L$-total coloring of $G$.
If $L_{av}(u_1u_2,\varphi')=\{3,4,a,b\}$,
$L_{av}(u_2u_3,\varphi')=\{1,3,a,b\}$ and $\{a,b\}\cap \{1,3,4\}\neq \emptyset$, then recolor $u_3$ with 2 and denote the current coloring by $\varphi'''$.
By Lemma \ref{de} and similar arguments as before, one can prove that $\varphi'''$ can be extended to an $L$-total coloring of $G$.
If $L_{av}(u_1,\varphi')=\{1,2\}$, then color $u_3,u_1u_3$ and $u_1$ with 1,3 and 2. Denote this partial coloring of $G$ by $\varphi''$. By similar arguments as above, one can prove that either $\varphi''$ can be extended to an $L$-total coloring of $G$ by Lemma \ref{de}, or we can construct a new partial coloring $\varphi'''$ by exchanging the colors on $u_1$ and $u_3$ that can be extended to an $L$-total coloring of $G$ by Lemma \ref{de}.
If $L_{av}(u_1,\varphi')=\{2,3\}$, then color $u_3,u_1u_3$ and $u_1$ with 1,3 and 2 and denote this partial coloring of $G$ by $\varphi''$. One can also prove that either $\varphi''$ can be extended to an $L$-total coloring of $G$ by Lemma \ref{de}, or we can construct a new partial coloring $\varphi'''$ that can be extended to an $L$-total coloring of $G$ by Lemma \ref{de} via recoloring $u_3,u_1u_3$ and $u_1$ with 2,1 and 3.

If $L_{av}(u_1u_3,\varphi')=\{3,4\}$ and $L_{av}(u_3,\varphi')=\{1,2\}$, then we shall assume that $L_{av}(u_1,\varphi')\cap \{3,4\}=\emptyset$. (otherwise we come back to the above case). We now
color $u_1$ with $\varphi''(u_1)\in L_{av}(u_1,\varphi')$, $u_1u_3$ with $\varphi''(u_1u_3)\in L_{av}(u_1u_3,\varphi')\setminus \{\varphi''(u_1)\}$, and $u_3$ with $\varphi''(u_3)\in L_{av}(u_3,\varphi')\setminus \{\varphi''(u_1)\}$.
Denote this coloring by $\varphi''$.
By Lemma \ref{de} and similar arguments as before, $\varphi''$ can be extended to an $L$-total coloring of $G$ if
$\{\varphi''(u_1u_3),\varphi''(u_3)\}\not\subseteq L_{av}(u_2u_3,\varphi')$, or $\{\varphi''(u_1u_3),\varphi''(u_1)\}\not\subseteq L_{av}(u_1u_2,\varphi')$, or $L_{av}(u_2u_3,\varphi')\setminus \{\varphi''(u_1u_3),\varphi''(u_3)\}\neq L_{av}(u_1u_2,\varphi')\setminus \{\varphi''(u_1u_3),\varphi''(u_1)\}$.
Therefore, we assume that $L_{av}(u_2u_3,\varphi')=\{\varphi''(u_1u_3),\varphi''(u_3),a,b\}$ and $L_{av}(u_1u_2,\varphi')= \{\varphi''(u_1u_3),\varphi''(u_1),a,b\}$, where $\{a,b\}\cap \{\varphi''(u_1u_3),\varphi''(u_1),\varphi''(u_3)\}=\emptyset$. If $\varphi''(u_1)\not\in \{1,2\}$, then we can construct a new partial coloring $\varphi'''$ that can be extended to an $L$-total coloring of $G$ by Lemma \ref{de} via recoloring $u_3$ with a color form $L_{av}(u_3,\varphi')\setminus \{\varphi''(u_3)\}$. Thus, we assume, without loss of generality, that $\varphi''(u_1)=1$, $\varphi''(u_3)=2$ and $\varphi''(u_1u_3)=3$. If $L_{av}(u_1,\varphi')\neq \{1,2\}$, then recolor $u_1$ with a color from $L_{av}(u_1,\varphi')\setminus \{1,2,3,4\}$, which is a non-empty set since $L_{av}(u_1,\varphi')\cap \{3,4\}=\emptyset$. If $L_{av}(u_1,\varphi')=\{1,2\}$, then exchange the colors on $u_1$ and $u_3$. In each case, the resulted coloring can be extended to an $L$-total coloring of $G$ by Lemma \ref{de}.
\end{proof}

We are now ready to give a proof of Theorem \ref{listtotal}.

\begin{proof}[\textbf{Proof of Theorem \ref{listtotal}}]

Suppose that $G$ is a counterexample to this theorem with the smallest value of $|V(G)|+|E(G)|$. It is easy to see that $\delta(G)\geq 2$ and every 2-vertex in $G$ is adjacent only to $\Delta(G)$-vertices, so by Theorem \ref{opg}, $G$ contains one of the configurations among (b)--(g).

If $G$ contains $(b)$, then $G'=G-\{u_1,u_2,u_3,v_2,v_3\}$ has an $L'$-total coloring $\varphi'$ so that $L'(x)=L(x)$ for each $x\in VE(G')$. One can see that
$|L_{av}(v_1v_2,\varphi')|,|L_{av}(v_1v_3,\varphi')|,|L_{av}(v_2v_4,\varphi')|,|L_{av}(v_3v_4,\varphi')|\geq 3$. If
$L_{av}(v_1v_3,\varphi')\cap L_{av}(v_2v_4,\varphi')\neq \emptyset$, then color $v_1v_3$ and $v_2v_4$ with a same color from $L_{av}(v_1v_3,\varphi')\cap L_{av}(v_2v_4,\varphi')$.
If $L_{av}(v_1v_3,\varphi')\cap L_{av}(v_2v_4,\varphi')=\emptyset$, then we can extend $\varphi'$ to another partial coloring $\varphi''$ of $G$ by
coloring $v_1v_3$ and $v_2v_4$ with $\varphi''(v_1v_3)\in L_{av}(v_1v_3,\varphi')$ and $\varphi''(v_2v_4)\in L_{av}(v_2v_4,\varphi')$ so that $|L_{av}(v_1v_2,\varphi')\setminus \{\varphi''(v_1v_3),\varphi''(v_2v_4)\}|\geq 2$ and
$|L_{av}(v_3v_4,\varphi')\setminus \{\varphi''(v_1v_3),\varphi''(v_2v_4)\}|\geq 2$.
In either case, we can extend $\varphi'$ to another partial coloring $\varphi''$ of $G$ by
coloring $v_1v_3$ and $v_2v_4$ from their available lists, and moreover, $\varphi''$ satisfy
$$|L_{av}(u_1v_1,\varphi'')|,|L_{av}(u_3v_4,\varphi'')|,|L_{av}(v_1v_2,\varphi'')|,|L_{av}(v_3v_4,\varphi'')|\geq 2,$$
$$|L_{av}(v_2,\varphi'')|,|L_{av}(v_3,\varphi')|\geq 3,$$
$$|L_{av}(v_2v_3,\varphi'')|\geq 4,$$and
$$|L_{av}(u_1v_2,\varphi'')|,|L_{av}(u_2v_2,\varphi'')|,|L_{av}(u_2v_3,\varphi'')|,|L_{av}(u_3v_3,\varphi'')|\geq 5.$$
Therefore, by Lemma \ref{b}, $\varphi''$ can be extended to an $L$-total coloring $\varphi$ of $G$ without altering the assigned colors.

If $G$ contains $(c)$, then $G'=G-\{u_2,u_4\}$ has an $L'$-total coloring $\varphi'$ so that $L'(x)=L(x)$ for each $x\in VE(G')$.
Since there are at least two available colors for each edge of the cycle $u_1u_2u_3u_4$
and every 4-cycle is 2-edge-choosable, we can color $u_1u_2,u_2u_3,u_3u_4$ and $u_4u_1$ from their available lists so that the extended coloring is proper. At last, we color $u_2$ and $u_4$ from their available lists to obtain an $L$-total coloring $\varphi$ of $G$. This can be easily done since $u_2$ or $u_4$ is incident with four colored elements right row.

If $G$ contains $(d)$, then $G'=G-\{u_2,u_4\}$ has an $L'$-total coloring $\varphi'$ so that $L'(x)=L(x)$ for each $x\in VE(G')$.
One can check that $|L_{av}(u_2u_4,\varphi')|\geq 6$, $|L_{av}(u_1u_2,\varphi')|,|L_{av}(u_1u_4,\varphi')|\geq 2$, $|L_{av}(u_2u_3,\varphi')|,|L_{av}(u_3u_4,\varphi')|\geq 3$ and $|L_{av}(u_2,\varphi')|,|L_{av}(u_4,\varphi')|\geq 4$. By Lemma \ref{de}, $\varphi'$ can be extended to an $L$-total coloring $\varphi$ of $G$ without altering the colors in $G'$.

If $G$ contains $(e)$, then $G'=G-\{u_2,u_4,u_3v\}$ has an $L'$-total coloring $\varphi'$ so that $L'(x)=L(x)$ for each $x\in VE(G')$. We now erase the color on $v$ and still denote the current coloring by $\varphi'$. It is easy to check that
$|L_{av}(u_2u_4,\varphi')|\geq 6$, $|L_{av}(u_1u_2,\varphi')|,|L_{av}(u_1u_4,\varphi')|\geq 2$, $|L_{av}(u_2u_3,\varphi')|,|L_{av}(u_3u_4,\varphi')|\geq 3$, $|L_{av}(u_2,\varphi')|,|L_{av}(u_4,\varphi')|\geq 4$,
$|L_{av}(u_3v,\varphi')|\geq 2$ and $|L_{av}(v,\varphi')|\geq 3$. We now color $u_3v$ with $\varphi''(u_3v)\in L_{av}(u_3v,\varphi')$ and $v$ with $\varphi''(v)\in L_{av}(v,\varphi')\setminus \{\varphi''(u_3v)\}$
so that the extended partial coloring $\varphi''$ satisfies

(1)
$|L_{av}(u_2u_4,\varphi'')|\geq 6$, $|L_{av}(u_2,\varphi'')|, |L_{av}(u_4,\varphi'')|\geq 4$, $|L_{av}(u_1u_2,\varphi'')|,|L_{av}(u_2u_3,\varphi'')|,|L_{av}(u_3u_4,\varphi'')|,$\\$|L_{av}(u_1u_4,\varphi'')|\geq 2$

(2)
$L_{av}(u_1u_2,\varphi'')\neq L_{av}(u_2u_3,\varphi'')$ if $|L_{av}(u_1u_2,\varphi'')|=|L_{av}(u_2u_3,\varphi'')|=2$.

\noindent This can be done since $|L_{av}(u_3v,\varphi')|\geq 2$. Therefore, $\varphi'$ can be extended to an $L$-total coloring $\varphi$ of $G$ without altering the colors in $G'$ by Lemma \ref{de}.

If $G$ contains $(f)$, then $G'=G-\{u_2,u_4\}$ has an $L'$-total coloring $\varphi'$ so that $L'(x)=L(x)$ for each $x\in VE(G')$. We now erase the colors on $u_3,u_3v$ and $v$ and denote the current coloring by $\varphi''$. It is easy to see that
$$|L_{av}(u_2u_4,\varphi'')|\geq 6,$$
$$|L_{av}(u_2,\varphi'')|,|L_{av}(u_4,\varphi'')|\geq 5,$$
$$|L_{av}(u_2u_3,\varphi'')|,|L_{av}(u_3u_4,\varphi'')|\geq 4,$$ and $$|L_{av}(u_1u_2,\varphi'')|,|L_{av}(u_1u_4,\varphi'')|,|L_{av}(u_3v,\varphi'')|,|L_{av}(u_3,\varphi'')|,|L_{av}(v,\varphi'')|\geq 2$$
Since $\varphi''$ is a partial coloring of $\varphi'$, at~least~two~of~$L_{av}(u_3v,\varphi''),L_{av}(u_3,\varphi''),L_{av}(v,\varphi'')$~are~distinct~if
$|L_{av}(u_3v,\varphi'')|=|L_{av}(u_3,\varphi'')|=|L_{av}(v,\varphi'')|=2$. Therefore, by Lemma \ref{f}, $\varphi''$ can be extended to an $L$-total coloring $\varphi$ of $G$ without altering the colors in $G'$.

If $G$ contains $(g)$, then $G'=G-\{u_2,u_4\}$ has an $L'$-total coloring $\varphi'$ so that $L'(x)=L(x)$ for each $x\in VE(G')$. We now erase the colors on $u_1,u_3$ and $u_1u_3$ and denote the current coloring by $\varphi''$. One can see that
$$|L_{av}(u_2,\varphi'')|,|L_{av}(u_4,\varphi'')|,|L_{av}(u_2u_4,\varphi'')|\geq 6, $$ $$|L_{av}(u_1u_2,\varphi'')|,|L_{av}(u_2u_3,\varphi'')|,|L_{av}(u_3u_4,\varphi'')|,|L_{av}(u_1u_4,\varphi'')|\geq 4,$$ and $$|L_{av}(u_1,\varphi'')|,|L_{av}(u_3,\varphi'')|,|L_{av}(u_1u_3,\varphi'')|\geq 2.$$ Since $\varphi''$ is a partial coloring of $\varphi'$, at~least~two~of~$L_{av}(u_1u_3,\varphi''),L_{av}(u_1,\varphi''),L_{av}(u_3,\varphi'')$~are~distinct~if
$|L_{av}(u_1,\varphi'')|=|L_{av}(u_3,\varphi'')|=|L_{av}(u_1u_3,\varphi'')|=2$. Therefore, $\varphi''$ can be extended to an $L$-total coloring $\varphi$ of $G$ without altering the colors in $G'$ by Lemma \ref{g}.
\end{proof}


\begin{thebibliography}{10}\setlength{\itemsep}{0pt}

\bibitem{Bondy} J. A. Bondy and U. S. R. Murty, {\it Graph Theory with Applications}, North-Holland, New York, 1976.

\bibitem{Borodin.lcc} O. V. Borodin, A. V. Kostochka, D. R. Woodall, List edge and list total colourings of multigraphs. {\it Journal of Combinatorial Theory, Series B} 71 (1997) 184-204.

\bibitem{Hou1} J. Hou, G. Liu, J. Cai, List edge and list total colorings of planar graphs without 4-cycles. \emph{Theoretical Computer Science} 369 (2006) 250--255.

\bibitem{Hou2} J. Hou, G. Liu, J.-L. Wu, Some results on list total colorings of planar graphs. \emph{Lecture Notes in Computer Science} 4489 (2007) 320--328.
\bibitem{Juvan} M. Juvan, B. Mohar, R. Skrekovski, List total colourings of graphs. \emph{Combinatorics Probability \& Computing} 7(2) (1998) 181--188.

\bibitem{Liu1} B. Liu, J. Hou, G. Liu, List edge and list total colorings of planar graphs without short cycles. \emph{Information Processing Letters} 108 (2008) 347--351.

\bibitem{Liu2} B. Liu, J. Hou, J.-L. Wu, Total colorings and list total colorings of planar graphs without intersecting 4-cycles. \emph{Discrete Mathematics} 309 (2009) 6035--6043.

\bibitem{Liu3} B. Liu, J. Hou, G. Liu, List total colorings of planar graphs without triangles at small distance. \emph{Acta Mathematica Sinica, English Series} 27 (2011) 2437--2444.

\bibitem{Wang} W. Wang, K. Zhang, $\Delta$-Matchings and edge-face chromatic numbers. \emph{Acta Math. Appl. Sinica} 22 (1999) 236--242.

\bibitem{Wang2} W. Wang, K.-W. Lih, Choosability, edge-choosability and total choosability of outerplane graphs. \emph{European J. Combin.} 22 (2001) 71--78

\bibitem{WJL} J.-L. Wu, P. Wang, List-edge and list-total colorings of graphs embedded on hyperbolic surfaces. \emph{Discrete Mathematics} 308 (2008) 6210--6215.
\bibitem{ZPOPG} X. Zhang, G. Liu, J. L. Wu. Edge covering pseudo-outerplanar graphs with forests. \emph{Discrete Mathematics} 312 (2012) 2788--2799.

\bibitem{Arxiv} X. Zhang, G. Liu, Total coloring of pseudo-outerplanar graphs. \emph{Arxiv}:\,1108.5009.

\bibitem{Zhou} X. Zhou, Y. Matsuo, T. Nishizeki, List total colorings of series-parallel graphs. \emph{Journal of Discrete Algorithms} 3 (2005) 47--60.


\end{thebibliography}
\end{document}